\documentclass[journal]{IEEEtran}

\usepackage{amsmath,graphicx}
  \usepackage{amsthm}
\usepackage{url}
\usepackage{booktabs}
\usepackage{tabularx}
\usepackage{makecell}

\usepackage{pgfplots}

\usepackage{amssymb}    
\usepackage{mathtools} 

\usepackage[caption=false,font=normalsize,labelfont=sf,textfont=sf]{subfig}

\usepackage[ruled,norelsize,lined,linesnumbered]{algorithm2e}
\usepackage{algorithmic}

\newtheorem{theorem}{Theorem}
\newtheorem{definition}{Definition}
\newtheorem{remark}{Remark}
\newtheorem{proposition}{Proposition}

\newtheorem{lemma}{Lemma}

\newtheorem{assumption}{Assumption}

\newtheorem{corollary}{Corollary}

\pgfplotsset{compat=1.18}
\usetikzlibrary{patterns.meta, arrows.meta}
\definecolor{oceanblue}{RGB}{53, 138, 190}
\definecolor{palegreen}{RGB}{179, 216, 178}
\definecolor{paleorange}{RGB}{255, 227, 178}
\definecolor{palepurple}{RGB}{242, 204, 218}
\usepackage{enumitem}

\newcommand{\R}{\mathbb{R}}

\newcommand{\B}{\mathbb{B}}
\newcommand{\Sph}{\mathbb{S}}
\newcommand{\Y}{\mathcal{Y}}

\newcommand{\norm}[1]{\left\|#1\right\|}

\newcommand{\diam}{\operatorname{diam}}
\newcommand{\conv}{\mathrm{conv}}
\newcommand{\clarke}{\mathrm{J}^c}
\newcommand{\jac}{\mathrm{J}}
\newcommand{\apjac}{\widehat{\mathrm{J}}}
\newcommand{\apnbla}{\widehat{\nabla}}

\newcommand{\BiC}{\partial^c \tilde\varphi}

\newcommand{\sav}[1]{\textcolor{red}{[#1]\raise 0.5ex \hbox{\footnotesize{SB}}}}

\usepackage{comment}

\usepackage[hidelinks]{hyperref}

\title{\LARGE \bf
Bi-ZOL: Bilevel Zeroth-Order Learning with Nonsmooth Responses
}

\author{Zhisen Jiang, Saverio Bolognani
\thanks{ 
The authors are with the Automatic Control Laboratory, ETH Zurich, 8092 Zurich, Switzerland. Email: \{zhijiang, bsaverio\}@ethz.ch.}
}

\begin{document}

\maketitle

\begin{abstract}
This paper studies lower-level-constrained bilevel optimization in a response-oracle setting, where lower-level model information is unavailable and the induced response mapping is locally Lipschitz but potentially nonsmooth. In this setting, the classical response Jacobian and reduced hypergradient may fail to exist. We propose Bilevel Zeroth-Order Learning (Bi-ZOL), a structure-guided zeroth-order method for finding stationary points of the nonsmooth reduced problem. Instead of estimating the gradient of a fully smoothed reduced hyperobjective, Bi-ZOL separates the bilevel chain-rule structure: it keeps the exact upper-level partial gradients at the queried response and uses zeroth-order sampling only to estimate the response Jacobian. This construction yields an approximate hypergradient that is more directly aligned with the Clarke chain-rule subdifferential. We show that the Bi-ZOL direction admits a partial-smoothing interpretation, quantify its pointwise structural bias, and prove finite-time convergence to a $(\delta,\epsilon)$-Bi-ZOL Frank--Wolfe stationary point. The bias is $O(\delta)$ for piecewise $C^{1,1}$ responses under local regularity and vanishes for piecewise affine responses on active-cell neighborhoods. 
Experiments on incentive-based tracking problems show that Bi-ZOL achieves smaller stationarity gaps and lower hyperobjective values than vanilla zeroth-order smoothing under comparable response-oracle budgets.
\end{abstract}

\begin{IEEEkeywords}
    bilevel optimization, zeroth-order smoothing, gradient estimation.
\end{IEEEkeywords}

\section{Introduction}

Many control and decision systems have a hierarchical structure, where an upper-level decision maker designs a policy, incentive, or control signal, and lower-level agents respond through constrained local decisions. 
Such problems arise in traffic networks \cite{Grontas2023traffic}, energy systems \cite{Jiang2025sg}, and machine learning tasks such as hyperparameter tuning \cite{Maclaurin2015hyperparam_tuning} and Large Language Model training \cite{Reisizadeh2025LLM}. 
Mathematically, they can be modeled as bilevel optimization problems in which the lower-level problem is parameterized by the upper-level decision and is subject to local constraints. 
The key object is the response mapping from the upper-level decision to the lower-level reaction, which inherently couples the two levels.

Although various solution methods have been proposed over the years, solving such bilevel problems remains difficult due to two intertwined practical and theoretical challenges.
From a practical standpoint, standard methods rely on explicit lower-level model information, which is not available in many control settings with only response queries to the lower-level are accessible.
KKT-based \cite{Outrata2013KKT} and value-function-based methods \cite{Shen2023penalty,Ye2022BOME} convert lower-level optimality into a single-level reformulation and solve it.
Therefore, they require full access to the lower-level objective, constraints, and qualification conditions.
Hypergradient methods usually compute or estimate the response Jacobian through implicit differentiation, which requires lower-level derivatives or Hessians \cite{Zhou20261,Kornowski2024,Maljkovic2025}.
Consequently, these approaches are not directly applicable in response-oracle settings. In such scenarios, the lower level is a black box, and the exact response Jacobian which represents the lower-level sensitivity information required by classical hypergradient methods cannot be computed from simple oracle queries.

Compounding this practical hurdle is a fundamental theoretical challenge: lower-level constraints inherently introduce nonsmoothness.
In this case, the response Jacobian required by hypergradient methods might even fail to exist.
When the lower-level problem contains inequality constraints, small changes in the upper-level variable may change the active constraint set of the lower-level solution. 
Thus, the response mapping is generally only locally Lipschitz, and it may fail to be continuously differentiable even when all objective functions are smooth. 
The lower-level constrained bilevel problem should be treated, in general, as a nonsmooth and nonconvex optimization task, where standard bilevel methods based on smooth assumptions do not apply directly.

Existing studies address this nonsmoothness issue mainly in two ways, each with different limitations.
The first line of works replaces the original nonsmooth response relation by a smooth auxiliary problem.
Barrier-based methods move the lower-level constraints into a barrier \cite{Jiang2024barrier}, or a smoothing-barrier augmented Lagrangian term \cite{Xu2024barrier_smoothing}, so that the algorithm targets a smooth parametric problem.
Similarly, Gap-function methods replace the optimal response by a smooth regularized primal-dual optimality residual \cite{Yao2024regularized_gap_function}.
Some perturbation-based methods add random linear perturbations to the lower-level objective so that the perturbed response is differentiable almost surely for linearly constrained lower-level problems \cite{Khanduri2023,Khanduri2025}.
These methods enable gradient-based algorithms to solve smooth approximations, but their connection to the original bilevel problem relies on exactness, consistency, or stationarity transfer analysis, often under additional structural assumptions.

The second line of works directly uses tools of nonsmooth optimization to solve the original problem.
For example, BIG Hype \cite{Grontas2024BIGHype} computes generalized hypergradients through conservative-Jacobian-based sensitivity learning and applies projected generalized-gradient updates. 
This framework gives a rigorous nonsmooth treatment, but it relies on learned generalized sensitivity information and uses diminishing stepsizes to handle the oscillations caused by the subgradient method.

To handle the black-box nature and nonsmoothness simultaneously, a natural strategy is to apply zeroth-order smoothing to the reduced hyperobjective \cite{Marco2026ZOBA, Maheshwari2024cdc, Jia2026}. 
Specifically, such methods use response queries to evaluate the upper-level objective as a black-box function, and then apply finite differences to estimate the gradient of this smoothed surrogate.
We refer to this strategy as vanilla zeroth-order smoothing (VZO), or full smoothing, because the gradient estimator is applied to the entire hyperobjective.
This approach avoids both lower-level model access and the undefined response Jacobian, but it neglects the bilevel information structure considered here: the upper-level has the closed-form expression of its objective and partial derivatives, while only the response mapping and its sensitivity are unavailable. 
Thus, the full-smoothing method may shift the stationary structure and introduce avoidable bias. 
Although recent works connect full-smoothing solutions to nonsmooth stationary points of general nonsmooth optimization \cite{Masiha2026, Kornowski2024, Cui2022MPEC}, their analyses are not tailored to the bilevel chain-rule structure. 
This motivates a partial-smoothing strategy that regularizes only the unavailable and possibly undefined response Jacobian.

To address these practical and theoretical challenges, we propose Bilevel Zeroth-Order Learning (Bi-ZOL) for lower-level-constrained bilevel optimization under a response-oracle setting.
The key idea is to smooth only the response mapping that appears in the missing chain-rule term.
Thus, Bi-ZOL keeps the upper-level partial derivatives at the true observed response while using randomized response queries to estimate the smoothed response sensitivity.
Our contributions are three-fold:
    \begin{itemize}
        \item We provide a systematic first-order characterization of the nonsmooth geometry induced by lower-level constraints and use it to design a structure-guided approximate hypergradient. 
Under a locally Lipschitz response mapping, we characterize the reduced hyperobjective through a bilevel Clarke chain-rule structure, which identifies the object that replaces the classical hypergradient when the response Jacobian is undefined. 
Motivated by this structure, we construct a partial-smoothing approximate hypergradient that keeps the upper-level partial derivatives evaluated at the true response and applies zeroth-order smoothing only to the unavailable response sensitivity.
This construction avoids estimating the gradient of a fully smoothed reduced hyperobjective and gives an approximate hypergradient that is directly tied to the nonsmooth bilevel chain-rule structure.

\item We introduce and analyze a Bi-ZOL Frank--Wolfe stationarity certificate for the proposed approximate hypergradient, and establish its connection to Clarke Frank--Wolfe stationarity of the original nonsmooth reduced problem.
We quantify the pointwise structural bias between the Bi-ZOL direction and the bilevel Clarke chain-rule subdifferential, and show how this bias controls the stationarity transfer from Bi-ZOL Frank--Wolfe stationary point to Clarke Frank--Wolfe stationary point. 
This analysis further identifies response structures under which the transfer becomes sharper: the bias is $O(\delta)$ for piecewise $C^{1,1}$ responses under local regularity, and it vanishes for piecewise affine responses.

\item We propose Bi-ZOL algorithm, a response-oracle zeroth-order Frank--Wolfe method for achieving the proposed stationarity certificate. 
Bi-ZOL uses randomized two-point response queries to estimate the proposed approximate hypergradient. 
With a Frank--Wolfe update, we prove finite-time convergence to a $(\delta,\epsilon)$-Bi-ZOL Frank--Wolfe stationary point and establish the corresponding response-oracle complexity. 
    \end{itemize}

The remainder of this paper is organized as follows. 
Section~\ref{sec:problem} introduces the problem formulation and the response-oracle setting.
Section~\ref{sec:first_order_geometry} studies the nonsmooth bilevel first-order geometry, constructs the partial-smoothing approximate hypergradient, and analyzes the corresponding structural bias and stationarity transfer. 
Section~\ref{sec:solution_methods} presents the Bi-ZOL algorithm, its zeroth-order response-sensitivity estimator, and the finite-time convergence and response-oracle complexity results. 
Section~\ref{sec:simulation_results} reports numerical results on incentive-based tracking problems. 
Section~\ref{sec:conclusions} concludes the paper.

\subsection{Notation and Preliminaries}
\label{subsec:NP}
\subsubsection*{Notation}
We denote the unit closed ball centered at the origin by $\B_n \coloneqq \{\xi \in \R^n : \|\xi\| \le 1\}$, and the unit sphere by $\Sph^{n-1} \coloneqq \{w \in \R^n : \|w\| = 1\}$. 
For a set $\mathcal{S}$, $\conv(\mathcal{S})$ denotes its convex hull, and the distance from a point $z$ to $\mathcal{S}$ is defined as $\operatorname{dist}(z, \mathcal{S}) \coloneqq \inf_{s \in \mathcal{S}} \|z - s\|$. 
Let $\mathrm{Unif}(\mathcal{S})$ denote the uniform distribution over a set $\mathcal{S}$. 
For a differentiable vector-valued mapping $F: \R^n \to \R^m$, its Jacobian matrix is denoted by $\jac F(x) \in \R^{m \times n}$.
For complexity analysis, we use standard Big-O notation, where $O(\cdot)$ and $\Theta(\cdot)$ hide absolute numerical constants that are independent of problem dimensions and structural parameters.

\subsubsection*{Nonsmooth analysis}

Let $\mathcal O\subseteq\mathbb R^n$ be an open set. 
A function $f:\mathcal O\to\mathbb R$ is called locally Lipschitz on
$\mathcal O$ if, for every $x\in\mathcal O$, there exist a radius
$r_x>0$ and a constant $L_x>0$ such that
$B_{r_x}(x)\subseteq\mathcal O$ and
\(
|f(u)-f(v)|
\le
L_x\|u-v\|,
\,
\forall u,v\in B_{r_x}(x).
\)
If the same constant $L$ works for all $u,v$ in a given set, then $f$ is
called $L$-Lipschitz on that set. In particular, an $L$-Lipschitz
function on an open set is locally Lipschitz on that set.

By Rademacher's theorem, locally Lipschitz functions are differentiable
almost everywhere in the sense of Lebesgue measure. Hence, for any locally
Lipschitz function $f:\mathcal O\to\mathbb R$ and any point
$x\in\mathcal O$, the Clarke subdifferential is defined as
\(
\partial^c f(x)
:=
\conv
\left\{
\lim_{k\to\infty}\nabla f(x^k)
\ \middle|\
x^k\to x,\ x^k\in\Omega_f
\right\},
\)
where $\Omega_f\subseteq\mathcal O$ is the full-measure set of points at
which $f$ is differentiable \cite{Clarke1990}. Equivalently,
$\partial^c f(x)$ is the convex hull of all limit points of
$\nabla f(x^k)$ over sequences of differentiable points converging to
$x$. If $f$ is continuously differentiable at $x$, then
\(
\partial^c f(x)=\{\nabla f(x)\}.
\)

Given $\delta>0$, the Goldstein $\delta$-subdifferential of $f$ at $x$ is
defined as
\(
\partial_\delta f(x)
:=
\conv
\left(
\bigcup_{z\in B_\delta(x)\cap\mathcal O}
\partial^c f(z)
\right)
\)
in \cite{Goldstein1977}.
That is, $\partial_\delta f(x)$ collects all convex combinations of
Clarke subdifferentials at points in a $\delta$-neighborhood of $x$. 

We also need the generalized derivative of vector-valued mappings. Let
$F:\mathcal O\to\mathbb R^m$ be locally Lipschitz on the open set
$\mathcal O$. By Rademacher's theorem, $F$ is differentiable almost
everywhere on $\mathcal O$. Its \textit{Clarke generalized Jacobian} is
the set-valued mapping
\(
\clarke F:\mathcal O\rightrightarrows\mathbb R^{m\times n}
\)
defined by
\(
\clarke F(x)
:=
\conv
\left\{
\lim_{k\to\infty}\jac F(x^k)
\ \middle|\
x^k\to x,\ x^k\in\Omega_F
\right\},
\)
where $\Omega_F\subseteq\mathcal O$ is the full-measure set of points at
which $F$ is differentiable \cite{Clarke1990}. Thus, $\clarke F(x)$ is
the convex hull of all limiting Jacobians of $F$ around $x$. When $F$ is
continuously differentiable at $x$, this set reduces to the singleton
\(
\clarke F(x)=\{\jac F(x)\}.
\)

\section{Problem Formulation}
\label{sec:problem}
In this work, we study a bilevel problem with constrained lower-level problem:
\begin{alignat}{2}
    & \min_{x,y} \quad && \varphi \left( x, y \right) \label{model:p1} \tag{${\rm P}1$}\\
    & \text{subject to:} \quad &&  x \in \mathcal{X},  \\
    & && y \in S(x) \coloneqq \arg\min_{\xi\in\Y} \, g(x,\xi),
\end{alignat}
where $x \in \R^n, y \in \R^m$ denote the decisions of the upper and lower level, respectively. 
$S(x)$ denotes the set of optimal solutions for the lower-level problem.

The following assumption turns the lower-level solution set into a well-defined response mapping and gives the regularity needed for our partial smoothing method. 

\begin{assumption}[Lower-level response regularity]
\label{ass:response}
The lower-level solution mapping \(S(x)\) is single-valued on an open
neighborhood \(\mathcal N\) of \(\mathcal X\). We write its unique value as
\(y(x)\) \footnote{By a slight abuse of notation, $y$ denotes both the response mapping $y(\cdot)$ and its value $y(x)$.}.

The response mapping \(y:\mathcal N\to\mathbb R^m\) is locally Lipschitz on
\(\mathcal N\). Since \(\mathcal X\) is compact and
\(\mathcal N\) is an open neighborhood of \(\mathcal X\), there exists
\(\bar\delta>0\) such that
\(
\mathcal X+\bar\delta\mathbb B_n\subseteq \mathcal N .
\)
We fix such a \(\bar\delta\). On the compact inflated set
\(
\mathcal X_{\bar\delta}
\coloneqq
\mathcal X+\bar\delta\mathbb B_n,
\)
the response map is bounded and \(L_y\)-Lipschitz, namely,
\[
\|y(x)-y(x')\|
\le
L_y\|x-x'\|,
\qquad
\forall x,x'\in\mathcal X_{\bar\delta},
\]
and
\(
\sup_{u\in\mathcal X_{\bar\delta}}\|y(u)\|
\le
Y_{\max}.
\)
All smoothing radius used in the Bi-ZOL construction satisfy
\(0<\delta\le\bar\delta\).
\end{assumption}

\begin{remark}[A sufficient condition for response Lipschitz continuity]
\label{rem:response_lipschitz}
Assumption~\ref{ass:response} is satisfied by standard parameterized strongly convex
lower-level problems whose objective and feasible set are well-defined for
all parameters in an open neighborhood of \(\mathcal X\).
Suppose that there exists an open neighborhood $\mathcal N$ of $\mathcal X$
such that the lower-level problem
\[
S(x):=\arg\min_{\xi\in\mathcal Y} g(x,\xi)
\]
is well-defined for every $x\in\mathcal N$. Assume that $\mathcal Y$ is closed
and convex, $g(x,\cdot)$ is $\mu$-strongly convex on $\mathcal Y$ uniformly in
$x\in\mathcal N$, and $\nabla_y g(x,\xi)$ is Lipschitz continuous in $x$
uniformly in $\xi\in\mathcal Y$, i.e.,
\[
\|\nabla_y g(x,\xi)-\nabla_y g(x',\xi)\|
\le
L_{gx}\|x-x'\|,
\,
\forall x,x'\in\mathcal N,\, \forall \xi\in\mathcal Y.
\]
Then $S(x)$ is a singleton for every $x\in\mathcal N$. Writing
$y(x)$ for its unique element, the response mapping
$y:\mathcal N\to\mathbb R^m$ satisfies
\[
\|y(x)-y(x')\|
\le
\frac{L_{gx}}{\mu}\|x-x'\|,
\qquad
\forall x,x'\in\mathcal N.
\]
Hence, $y(\cdot)$ is locally Lipschitz on $\mathcal N$.
\end{remark}

The next assumptions on the upper level are standard:
\begin{assumption}[Feasible set]
\label{ass:feasible}
$\mathcal{X}\subset\R^n$ is nonempty, convex, and compact.
Let $D\coloneqq \diam(\mathcal{X})=\sup_{x,x'\in\mathcal{X}}\norm{x-x'}<\infty$.
\end{assumption}

\begin{assumption}[Upper-level regularity]
\label{ass:upper_regular}
The upper-level objective satisfies
\(
\varphi\in C^1(\mathbb R^n\times\mathbb R^m).
\)
Moreover, there exist finite constants
$L_{1x},L_{1y},L_{2x},L_{2y},M_y>0$ such that, for all relevant
$(x,y),(x',y')$ evaluated,
\[
\left\|
\nabla_1\varphi(x,y)-\nabla_1\varphi(x',y')
\right\|
\le
L_{1x}\|x-x'\|+L_{1y}\|y-y'\|,
\]
\[
\left\|
\nabla_2\varphi(x,y)-\nabla_2\varphi(x',y')
\right\|
\le
L_{2x}\|x-x'\|+L_{2y}\|y-y'\|,
\]
and
\(
\left\|
\nabla_2\varphi(x,y)
\right\|
\le
M_y.
\)
\end{assumption}

\section{Nonsmooth bilevel first-order geometry}
\label{sec:first_order_geometry}
With Assumption \ref{ass:response}, we substitute the lower-level response $y(x)$ in $\varphi$, which equivalently express \eqref{model:p1} as the following reduced problem:
\begin{equation}
    \underset{x \in \mathcal{X}}{\text{minimize}} \quad  \varphi \left( x,y\left( x \right) \right) \eqqcolon \tilde{\varphi} \left( x \right) \label{model:p2},  \tag{${\rm P}2$}
\end{equation}
and we denote $\tilde{\varphi} \left( x \right)$ as the hyperobjective.

\subsection{Approximate Hypergradient of Lower-level Constrained Bilevel Problem via Randomized Smoothing}
\label{sec:approx_hypergrad}
If the response mapping $y(\cdot)$ is differentiable at $x$, then the
classical chain rule gives the hypergradient
\begin{equation}
\label{eq:classical_hypergradient}
\nabla \tilde\varphi(x)
=
\nabla_1\varphi(x,y(x))
+
\jac y(x)^\top \nabla_2\varphi(x,y(x)).
\end{equation}
However, in constrained lower-level problems, the response mapping
$y(\cdot)$ is generally nonsmooth. Hence $\jac y(x)$ may fail to exist, and
the classical hypergradient in \eqref{eq:classical_hypergradient} is not
well-defined at nonsmooth points. We therefore describe the first-order
geometry of the reduced hyperobjective through the Clarke subdifferential.

For notational convenience, throughout this paper we write
\[
d_x(x)
\coloneqq
\nabla_1\varphi(x,y(x)),
\qquad
d_y(x)
\coloneqq
\nabla_2\varphi(x,y(x)).
\]
The following result gives the bilevel chain-rule representation of the
Clarke subdifferential of the reduced hyperobjective.

\begin{proposition}[Bilevel chain-rule Clarke subdifferential]
\label{prop:bl_chain_rule}
Under Assumptions~\ref{ass:response} and~\ref{ass:upper_regular}, for every
$x\in\mathcal X$,
\begin{equation}
\label{eq:clarke_subdiff_bl_chain_rule}
\partial^c \tilde\varphi(x)
=
\left\{
d_x(x)+V^\top d_y(x)
:
V\in \clarke y(x)
\right\}.
\end{equation}
\end{proposition}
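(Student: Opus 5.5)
The plan is to compute both sides of \eqref{eq:clarke_subdiff_bl_chain_rule} from limiting gradients along a single full-measure set. Everything is local to a neighborhood of $x$ inside the open set $\mathcal N$ of Assumption~\ref{ass:response}, so boundary points of $\mathcal X$ cause no trouble. First, $\tilde\varphi$ is locally Lipschitz near $x$. Indeed, $y$ is $L_y$-Lipschitz on $\mathcal X_{\bar\delta}$, and $\varphi\in C^1$ is Lipschitz on the compact set $\{(u,y(u)):u\in\mathcal X_{\bar\delta}\}$ plus a small ball around it. Hence $\partial^c\tilde\varphi(x)$ is well defined.

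Let $\Omega_y$ be the full-measure set of points where $y$ is differentiable. At every $u\in\Omega_y$ the classical chain rule applies, because $\varphi$ is differentiable and $y$ is differentiable at $u$. This gives $\tilde\varphi$ differentiable at $u$ with
\[
\nabla\tilde\varphi(u)=d_x(u)+\jac y(u)^\top d_y(u).
\]

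The key tool is that Clarke objects are blind to null sets. For scalar functions, Clarke's Theorem 2.5.1 states that $\partial^c\tilde\varphi(x)=\conv\{\lim\nabla\tilde\varphi(u^k): u^k\to x,\ u^k\in\Omega\}$ for any full-measure set $\Omega$ contained in the differentiability set. I take $\Omega=\Omega_y$. For vector maps I use the analogous Fabian--Preiss/Warga result: $\clarke y(x)=\conv\{\lim\jac y(u^k): u^k\to x,\ u^k\in\Omega_y\}$, and this is unchanged if a further null set is removed. I also use continuity. Since $\varphi\in C^1$ and $y$ is continuous, $d_x(u^k)\to d_x(x)$ and $d_y(u^k)\to d_y(x)$ whenever $u^k\to x$.

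With these tools in hand, I would show that the two sets of limiting objects correspond. Take a sequence $u^k\to x$ in $\Omega_y$ along which $\nabla\tilde\varphi(u^k)$ converges. The Jacobians $\jac y(u^k)$ are bounded by $L_y$, so after passing to a subsequence $\jac y(u^k)\to V$ with $V$ a limiting Jacobian. The limit of the gradients is then $d_x(x)+V^\top d_y(x)$. Conversely, let $V=\lim\jac y(u^k)$ along some sequence in $\Omega_y$. Along that same sequence, $\nabla\tilde\varphi(u^k)\to d_x(x)+V^\top d_y(x)$. So the set of limiting gradients of $\tilde\varphi$ is exactly the image of the set of limiting Jacobians under the affine map $A:V\mapsto d_x(x)+V^\top d_y(x)$. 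Because $A$ is affine, the convex hull commutes with it: $\conv A(\mathcal S)=A(\conv\mathcal S)$. Taking convex hulls therefore yields \eqref{eq:clarke_subdiff_bl_chain_rule}.

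I expect the main obstacle to be the null-set invariance step, and specifically its vector-valued version. Without it, the scalar limits of $\nabla\tilde\varphi$ could only be taken along $\Omega_{\tilde\varphi}$, and the Jacobians of $y$ might not be defined there. A bare application of Clarke's general chain rule (Theorem 2.6.6) gives only the inclusion $\subseteq$. If citing Fabian--Preiss is undesirable, the fallback is to run the whole argument on $\Omega_y$ from the outset: justify the scalar side by Theorem 2.5.1 with the null set $\mathcal N\setminus\Omega_y$ excluded, and note that the definition of $\clarke y$ in the paper already uses $\Omega_y$, so no vector invariance is needed. This fallback appears to suffice, and it makes the argument self-contained.
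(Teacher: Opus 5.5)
Your proposal is correct and follows essentially the same route as the paper's proof. Both compute limiting gradients of $\tilde\varphi$ along sequences in $\Omega_y$ via the classical chain rule, extract convergent subsequences of the bounded Jacobians $\jac y(u^k)$, and use that the affine map $V\mapsto d_x(x)+V^\top d_y(x)$ commutes with convex hulls. Your explicit appeal to Clarke's Theorem~2.5.1 (discarding the null set $\mathcal N\setminus\Omega_y$) supplies the justification the paper leaves implicit in its ``limiting-gradient characterization'' step, since the paper defines $\partial^c\tilde\varphi$ through $\Omega_{\tilde\varphi}$ rather than $\Omega_y$. As you note yourself, no vector-valued Fabian--Preiss invariance is needed, because $\clarke y$ is already defined through $\Omega_y$.
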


\begin{proof}
The proof is given in Appendix~\ref{app:proof_bl_chain_rule}.
\end{proof}

Proposition~\ref{prop:bl_chain_rule} shows that the nonsmooth first-order
object of the bilevel hyperobjective is generally a set rather than a single
vector. When $y(\cdot)$ is continuously differentiable around $x$, the
Clarke generalized Jacobian reduces to
\(
\clarke y(x)=\{\jac y(x)\},
\)
and $\partial^c\tilde\varphi(x)$ reduces to the classical hypergradient in
\eqref{eq:classical_hypergradient}. At nonsmooth points,
$\partial^c\tilde\varphi(x)$ collects all limiting chain-rule directions
induced by the lower-level response mapping.

Although Proposition~\ref{prop:bl_chain_rule} characterizes the nonsmooth first-order geometry of the reduced hyperobjective via the Clarke generalized Jacobian $\clarke y(x)$, a practical challenge arises because the upper level only observes the response values $y(x)$ and lacks the means to precisely construct this set.
Therefore, the key task is to construct a computable direction that respects the chain-rule structure in Proposition~\ref{prop:bl_chain_rule}.

Since the core difficulty lies in the undefined response Jacobian, our strategy is to apply targeted smoothing strictly to this sensitivity term to obtain an approximate Jacobian, ultimately leading to a tractable approximate hypergradient.
Let $\bar\delta>0$ be such that
\(
\mathcal X+\bar\delta\mathbb B_n\subseteq \mathcal N,
\)
where $\mathcal N$ is the open neighborhood in
Assumption~\ref{ass:response}. For any $0<\delta\le\bar\delta$, we define the Bi-ZOL approxiamte hypergradient as
\begin{equation}
\label{eq:approx_hypergradient}
g_\delta(x)
=
\nabla_1\varphi(x,y(x))
+
\jac y_\delta(x)^\top
\nabla_2\varphi(x,y(x)),
\end{equation}
where
$y_\delta(x)
\coloneqq
\mathbb E_{\xi\sim{\rm Unif}(\mathbb B_n)}
\left[
y(x+\delta \xi)
\right]$.
Since $y(\cdot)$ is locally Lipschitz on $\mathcal N$, it is Lipschitz on the
compact query region $\mathcal X+\bar\delta\mathbb B_n$.
Moreover, $y_\delta(\cdot)$ is smooth.

The definition in \eqref{eq:approx_hypergradient} follows a
``keep exact information and surrogate only the missing sensitivity'' principle.
The upper-level partial derivatives
\(
\nabla_1\varphi(x,y(x)),
\,
\nabla_2\varphi(x,y(x))
\)
are evaluated at the actual lower-level response $y(x)$ and are not
twisted. 
The only changing object is the response sensitivity, where the
undefined Jacobian $\jac y(x)$ is replaced by the Jacobian of the smoothed
response $\jac y_\delta(x)$. 

\begin{remark} [Comparison to full smoothing]
This construction differs from applying smoothing directly to
the scalar hyperobjective $\tilde\varphi(x)$.
A full smoothing approach defines a smooth surrogate via the expectation by 
\(
\tilde\varphi_\delta(x) \coloneqq \mathbb{E}_{\xi \sim {\rm Unif}(\mathbb{B}_n)} \left[ \tilde\varphi(x + \delta \xi) \right],
\)
whose exact gradient is given by $\nabla \tilde\varphi_\delta(x)$.
As illustrated by the gradient formula, full smoothing averages not only the nonsmooth response sensitivity $\jac y$, but also convolutions of the upper-level partial derivatives $\nabla_1 \varphi$ and $\nabla_2 \varphi$ over a $\delta$-neighborhood. This unselective averaging inadvertently distorts the ground-truth landscape of the explicitly known upper-level objective and injects substantial unnecessary variance during iterations. In sharp contrast, the Bi-ZOL direction $g_\delta(x)$ in \eqref{eq:approx_hypergradient} strictly preserves the exact upper-level first-order information at the actual response $y(x)$, isolating the randomized smoothing exclusively to the undefined response sensitivity term.
\end{remark}

\subsection{Anchored Chain-Rule Model}
\label{sec:anchored_chain_rule_model}

We next give a model-based interpretation of
\eqref{eq:approx_hypergradient}. For a fixed anchor point $x\in\mathcal X$,
define the anchored chain-rule model
\begin{equation}
\label{eq:anchored_chain_rule_model}
Q_x(u)
\coloneqq
\varphi(x,y(x))
+
\left\langle d_x(x),u-x\right\rangle
+
\left\langle d_y(x),y(u)-y(x)\right\rangle .
\end{equation}
Here, $u$ serves as the local model variable around the anchor $x$. Essentially, $Q_x(u)$ represents a local linearization of the upper-level objective $\varphi$ at the decision pair $(x, y(x))$. 
By freezing the upper-level partial derivatives while explicitly retaining the lower-level response map $y(u)$, this anchored model remains nonsmooth, yet captures the exact first-order geometry.

The following result shows that the Bi-ZOL direction \eqref{eq:approx_hypergradient} is the gradient of a
partially smoothed version of $Q_x$ and, at the same time, a Goldstein
subgradient of the same anchored model.

\begin{figure}[htbp]
    \centering
    \includegraphics[width=\columnwidth]{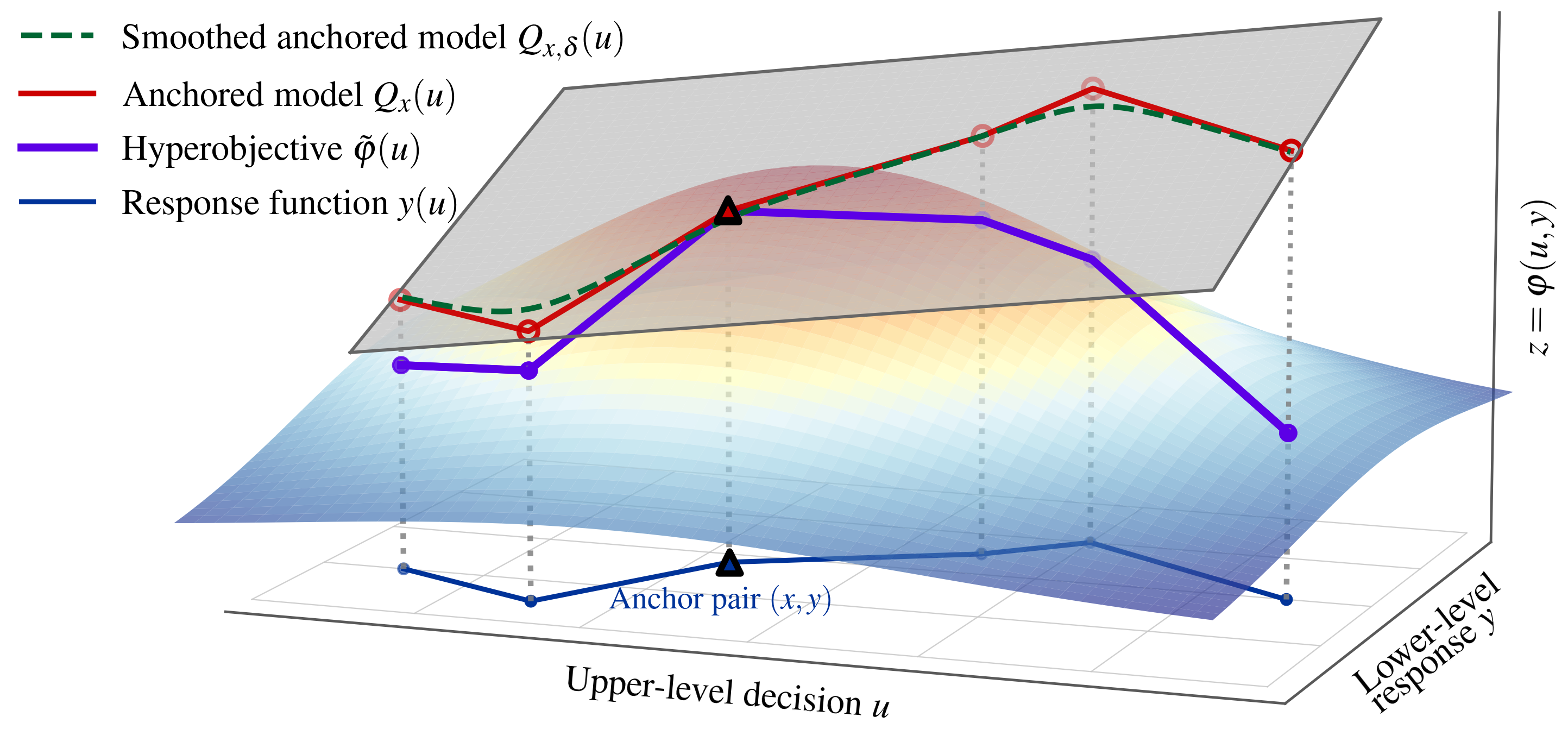}
    \caption{Geometric interpretation of the anchored chain-rule model at a fixed anchor $x$.
The lower-level response $y(u)$ is shown on the ground decision plane (blue).
Lifting $u\mapsto (u,y(u))$ yields the hyperobjective $\tilde\varphi(u)=\varphi(u,y(u))$ on the upper-level surface (purple).
At the anchor $(x,y(x))$, the tangent plane of $\varphi$ defines the linear model $T_x(u,y)$.
Restricting $y$ to the piecewise-linear response gives the nonsmooth anchored model $Q_x(u)$ (red), which preserves the Clarke first-order geometry of $\tilde\varphi$ at $u=x$ (Proposition~\ref{prop:anchored_model_interpretation}).
Replacing $y(u)$ by the Bi-ZOL smoothed response $y_\delta(u)$ on the same plane produces $Q_{x,\delta}(u)$ (green dashed), whose gradient at the anchor is exactly $g_\delta(x)$.}
    \label{fig:anchored_model_interpretation}
\end{figure}

\begin{proposition}[Anchored model interpretation]
\label{prop:anchored_model_interpretation}
Under Assumptions~\ref{ass:response} and~\ref{ass:upper_regular}, for every
$x\in\mathcal X$ and $0<\delta\le\bar\delta$, the following statements hold.

First,
\begin{equation}
\label{eq:anchored_model_same_clarke}
\partial^c Q_x(x)=\partial^c\tilde\varphi(x).
\end{equation}

Second, define the partially smoothed anchored model
\begin{equation*}
\begin{aligned}
Q_{x,\delta}(u)
\coloneqq
\varphi(x,y(x))
&+
\left\langle d_x(x),u-x\right\rangle\\
&+
\left\langle d_y(x),y_\delta(u)-y(x)\right\rangle .   
\end{aligned}
\end{equation*}
Then
\begin{equation}
\label{eq:g_delta_gradient_anchored_model}
g_\delta(x)=\nabla Q_{x,\delta}(x).
\end{equation}

Third,
\begin{equation}
\label{eq:g_delta_goldstein_anchored_model}
g_\delta(x)\in \partial_\delta Q_x(x),
\end{equation}
where $\partial_\delta Q_x(x)$ is the Goldstein $\delta$-subdifferential of
the scalar function $Q_x$. Moreover,
\begin{equation}
\label{eq:anchored_goldstein_set}
\begin{aligned}
\partial_\delta Q_x(x) &= d_x(x) \,+ \\
&\quad \conv \left( \bigcup_{\|s-x\|\le\delta} \left\{ V^\top d_y(x):V\in\clarke y(s) \right\} \right).
\end{aligned}
\end{equation}
\end{proposition}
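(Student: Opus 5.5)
The plan is to treat the anchored model $Q_x$ as a fixed linear functional composed with the Lipschitz map $u\mapsto(u,y(u))$. Since $d_x(x),d_y(x)$ are frozen vectors, $Q_x(u)=c+\langle d_x(x),u\rangle+\langle d_y(x),y(u)\rangle$ with $c$ constant. For every $s\in\mathcal N$ we then expect the exact formula
\begin{equation*}
\partial^c Q_x(s)=\left\{d_x(x)+V^\top d_y(x):V\in\clarke y(s)\right\}.
\end{equation*}
To prove it, note that at a differentiability point $u$ of $y$, $Q_x$ is differentiable with $\nabla Q_x(u)=d_x(x)+\jac y(u)^\top d_y(x)$. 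Taking limits along sequences $u^k\to s$ in $\Omega_y$ and convexifying gives ``$\supseteq$'' directly, because $V\mapsto d_x(x)+V^\top d_y(x)$ is affine and commutes with convex hulls.

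For ``$\subseteq$'' we need that the Clarke subdifferential of $Q_x$ can be computed using only points of $\Omega_y$. I will invoke the standard fact that $\partial^c f$ may be computed avoiding any Lebesgue-null set (Clarke's theorem 2.5.1), together with the fact that $\Omega_y\subseteq\Omega_{Q_x}$ has full measure. Alternatively, I can use the chain rule $\partial^c(h\circ F)(s)\subseteq \clarke F(s)^\top\nabla h$ for smooth (here linear) $h$, which holds with equality when $h$ is linear; this is the same mechanism already used for Proposition~\ref{prop:bl_chain_rule}. Evaluating at $s=x$, where $d_x(x),d_y(x)$ coincide with the partial gradients of $\varphi$ at $(x,y(x))$, and comparing with Proposition~\ref{prop:bl_chain_rule}, yields \eqref{eq:anchored_model_same_clarke}. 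I expect this null-set step to be the only delicate point.

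The second claim is a direct computation. $y_\delta$ is $C^1$ on a neighborhood of $\mathcal X$, since the uniform-ball convolution of a Lipschitz map is differentiable, and is well defined because $x+\delta\mathbb B_n\subseteq\mathcal N$. Hence $\nabla Q_{x,\delta}(u)=d_x(x)+\jac y_\delta(u)^\top d_y(x)$, and setting $u=x$ recovers \eqref{eq:approx_hypergradient}.

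For the third claim, I first get \eqref{eq:anchored_goldstein_set} by plugging the formula for $\partial^c Q_x(s)$, $\|s-x\|\le\delta$, into the definition of the Goldstein subdifferential. The constant shift $d_x(x)$ comes out of the convex hull. For the membership, I will show that $\jac y_\delta(x)\in\conv\bigl(\bigcup_{\|s-x\|\le\delta}\clarke y(s)\bigr)$, or more directly that $\jac y_\delta(x)=\mathbb E_{\xi}[\jac y(x+\delta\xi)]$. This exchange of derivative and expectation is justified by dominated convergence: the difference quotients are bounded by $L_y$ and converge a.e. because $y$ is differentiable a.e. (Rademacher).

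Then $g_\delta(x)=\mathbb E_\xi\bigl[d_x(x)+\jac y(x+\delta\xi)^\top d_y(x)\bigr]$ is an average of gradients $\nabla Q_x(s)\in\partial^c Q_x(s)$ at points in $B_\delta(x)$. These lie in a compact convex set, namely the closure of the convex hull, which is itself compact since the union is compact by upper semicontinuity of $\clarke y$. An expectation of a random vector supported in a compact convex set lies in that set, so the membership follows. The main obstacle is checking that the convex hull in the Goldstein definition is closed, so that the expectation, a limit of convex combinations, actually belongs to it. I will handle this by noting that $\bigcup_{\|s-x\|\le\delta}\partial^c Q_x(s)$ is compact by outer semicontinuity and local boundedness, and the convex hull of a compact set in $\R^n$ is compact by Carathéodory's theorem.
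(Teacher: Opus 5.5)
Your proposal is correct. For the first two claims it follows the paper: $\partial^c Q_x(x)$ comes from the chain rule for the linear scalarization $u\mapsto\langle d_x(x),u\rangle+\langle d_y(x),y(u)\rangle$ together with Proposition~\ref{prop:bl_chain_rule}, and the gradient identity is a direct differentiation. The Goldstein-set formula is then obtained by inserting $\partial^c Q_x(s)$ into the definition.

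Your appeal to Clarke's null-set theorem, to restrict limiting gradients to $\Omega_y$, is more careful than the paper. The paper's proof of Proposition~\ref{prop:bl_chain_rule} only takes limits along $\Omega_y$ and does not explain why differentiability points of $\tilde\varphi$ outside $\Omega_y$ can be ignored.

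The genuine difference is in the Goldstein inclusion. The paper notes that, since $\mathbb E[\xi]=0$, $Q_{x,\delta}$ is exactly the uniform-ball smoothing of $Q_x$, and then cites the randomized smoothing--Goldstein relation \cite[Theorem~3.1]{Lin2022} as a black box. You instead prove that relation in this instance:
\begin{itemize}
\item dominated convergence gives $\jac y_\delta(x)=\mathbb E_\xi[\jac y(x+\delta\xi)]$;
\item hence $g_\delta(x)=\mathbb E_\xi[\nabla Q_x(x+\delta\xi)]$ is an average of gradients of $Q_x$ at points of the $\delta$-ball;
\item compactness of the convex hull then places this average in $\partial_\delta Q_x(x)$.
\end{itemize}

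The paper's route is shorter and puts the interpretation up front: the Bi-ZOL direction is the gradient of a randomized smoothing of the anchored model. Your route is self-contained. Along the way it proves the identity $\jac y_\delta(x)=\mathbb E_\xi[\jac y(x+\delta\xi)]$, which the paper later uses without proof in Corollary~\ref{cor:pwa_response_zero_bias} (averaging of the $A_i$) and in Proposition~\ref{prop:smooth_response_bizol_stationarity}.

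The closedness step you flag as the main obstacle is handled correctly, but it is not essential. The mean of an integrable random vector that lies almost surely in a convex subset of $\R^n$ always lies in that subset.
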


\begin{proof} 
The complete proof is given in Appendix~\ref{app:proof_anchored_model}. Note that the Goldstein inclusion follows from the randomized smoothing--Goldstein relation applied to $Q_x$.
\end{proof}

Proposition~\ref{prop:anchored_model_interpretation} gives the precise
meaning of the Bi-ZOL approximate hypergradient. 
This geometric interpretation is shown in Fig.~\ref{fig:anchored_model_interpretation}.
The model $Q_x$ has the
same pointwise Clarke first-order geometry as the original hyperobjective at
the anchor point $x$, while its smoothed gradient is exactly $g_\delta(x)$.
Thus, $g_\delta(x)$ is not an arbitrary smoothed direction. It is the exact
gradient of a partially smoothed anchored chain-rule model and an element of
the Goldstein subdifferential of that same model.

This geometric interpretation will also be used in Section \ref{subsec:bizol_algorithm} to elaborate the algorithm.

\subsection{Structural Bias of a Bi-ZOL Direction}
\label{sec:structural_bias}

Proposition~\ref{prop:anchored_model_interpretation} shows that $g_\delta(x)$ belongs to the Goldstein subdifferential of the anchored model $Q_x$. 
To establish rigorous stationarity for the original problem, we now quantify how far this direction deviates from the pointwise Clarke chain-rule subdifferential of the true hyperobjective $\tilde\varphi(x)$.

For a fixed $x\in\mathcal X$, define the local set-variation modulus of the
Clarke generalized Jacobian by
\begin{definition}[Local set variation of the Clarke generalized Jacobian]
\label{def:local_set_variation}
For $x\in\mathcal X$ and $0<\delta\le\bar\delta$, define
\begin{equation}
\label{eq:clarke_jacobian_variation}
\omega_y^c(x,\delta)
\coloneqq
\sup_{\|s-x\|\le\delta}
\sup_{V\in\clarke y(s)}
\operatorname{dist}
\left(
V,\clarke y(x)
\right).
\end{equation}
\end{definition}

The quantity $\omega_y^c(x,\delta)$ measures how much the local generalized
Jacobian sets of the response map can move within a $\delta$-neighborhood of
$x$. It is a set-valued variation modulus. It is different from the ordinary
Lipschitz constant of $y(\cdot)$: the latter controls the size of elements in
$\clarke y(x)$, while $\omega_y^c(x,\delta)$ controls the local movement of
the set $\clarke y(\cdot)$ itself.

\begin{proposition}[Pointwise Clarke structural bias]
\label{prop:pointwise_clarke_bias}
Under Assumptions~\ref{ass:response} and~\ref{ass:upper_regular}, for every
$x\in\mathcal X$ and every $0<\delta\le\bar\delta$,
\begin{equation}
\label{eq:pointwise_clarke_bias}
\operatorname{dist}
\left(
g_\delta(x),
\partial^c\tilde\varphi(x)
\right)
\le
\|d_y(x)\|\,\omega_y^c(x,\delta).
\end{equation}
Consequently, for every fixed $x\in\mathcal X$,
\[
\operatorname{dist}
\left(
g_\delta(x),
\partial^c\tilde\varphi(x)
\right)
\to 0
\qquad
\text{as }\delta\downarrow 0.
\]
\end{proposition}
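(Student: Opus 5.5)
The plan is to write the smoothed Jacobian $\jac y_\delta(x)$ as an average of genuine Jacobians of $y$ at nearby points, and then to use convexity of the distance to the convex set $\clarke y(x)$. Proposition~\ref{prop:bl_chain_rule} then converts a matrix-level distance bound into a bound on the hypergradient.

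\emph{Step 1 (representation of the smoothed Jacobian).} By Assumption~\ref{ass:response}, $y$ is $L_y$-Lipschitz on $\mathcal X_{\bar\delta}$, and $x+\delta\xi\in\mathcal X_{\bar\delta}$ for every $\xi\in\B_n$ when $\delta\le\bar\delta$. By Rademacher's theorem, $y$ is differentiable at $x+\delta\xi$ for almost every $\xi$. The difference quotients $(y(x+te_i+\delta\xi)-y(x+\delta\xi))/t$ are bounded by $L_y$, so dominated convergence lets us differentiate under the expectation:
\[
\jac y_\delta(x)=\mathbb E_{\xi\sim\mathrm{Unif}(\B_n)}\left[\jac y(x+\delta\xi)\right].
\]
Alternatively, one can obtain the same identity by writing $y_\delta$ as a convolution with the normalized indicator of $\delta\B_n$ and using that $y$ is $W^{1,\infty}$ locally. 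I expect this step to be the main technical obstacle, since it requires care with measurability of $\xi\mapsto\jac y(x+\delta\xi)$ and with the a.e.\ differentiability. The Lipschitz bound handles the integrability.

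\emph{Step 2 (matrix distance bound).} At every point $s$ where $y$ is differentiable, $\jac y(s)\in\clarke y(s)$, because the constant sequence $x^k=s$ is admissible in the definition. Hence, for a.e.\ $\xi$, Definition~\ref{def:local_set_variation} with $s=x+\delta\xi$ gives
\[
\operatorname{dist}\left(\jac y(x+\delta\xi),\clarke y(x)\right)\le\omega_y^c(x,\delta).
\]
The set $\clarke y(x)$ is nonempty, convex and compact, so $M\mapsto\operatorname{dist}(M,\clarke y(x))$ is convex and continuous. Jensen's inequality then yields
\[
\operatorname{dist}\left(\jac y_\delta(x),\clarke y(x)\right)\le\omega_y^c(x,\delta).
\]
Let $V^\star$ denote the projection of $\jac y_\delta(x)$ onto $\clarke y(x)$, taken in the Frobenius norm, which I take as the matrix norm underlying $\operatorname{dist}$.

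\emph{Step 3 (transfer to the hypergradient).} By Proposition~\ref{prop:bl_chain_rule}, $d_x(x)+(V^\star)^\top d_y(x)\in\partial^c\tilde\varphi(x)$. Therefore
\[
\operatorname{dist}\left(g_\delta(x),\partial^c\tilde\varphi(x)\right)
\le\left\|(\jac y_\delta(x)-V^\star)^\top d_y(x)\right\|
\le\|\jac y_\delta(x)-V^\star\|\,\|d_y(x)\|.
\]
The last inequality holds because the operator norm is bounded by the Frobenius norm. This gives \eqref{eq:pointwise_clarke_bias}.

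\emph{Step 4 (vanishing bias).} For the limit statement, I use the standard upper semicontinuity of the Clarke generalized Jacobian \cite{Clarke1990}. For every $\varepsilon>0$ there is $\delta_\varepsilon>0$ such that
\[
\clarke y(s)\subseteq\clarke y(x)+\varepsilon\,\B \quad \text{whenever } \|s-x\|\le\delta_\varepsilon.
\]
Consequently $\omega_y^c(x,\delta)\le\varepsilon$ for all $\delta\le\delta_\varepsilon$, so $\omega_y^c(x,\delta)\to0$ as $\delta\downarrow0$. Since $\|d_y(x)\|\le M_y$, the bound from Step 3 tends to zero.
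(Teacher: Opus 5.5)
Your proof is correct, but it takes a different route from the paper's. The paper never works with $\jac y_\delta(x)$ directly. It invokes Proposition~\ref{prop:anchored_model_interpretation}, which combines two ingredients: the randomized smoothing--Goldstein inclusion of \cite{Lin2022} applied to the scalar anchored model $Q_x$, and the Clarke chain rule at every $s\in B_\delta(x)$ (each Clarke subgradient of $Q_x$ at $s$ has the form $d_x(x)+V^\top d_y(x)$ with $V\in\clarke y(s)$). Together these write $g_\delta(x)$ as a convex combination of such vectors, and the paper then uses convexity of $\partial^c\tilde\varphi(x)$ in $\R^n$.

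You instead establish the averaging representation $\jac y_\delta(x)=\mathbb E[\jac y(x+\delta\xi)]$ at the Jacobian level. You then apply Jensen to the distance to the convex set $\clarke y(x)$ in matrix space, and only at the end push forward through $V\mapsto d_x(x)+V^\top d_y(x)$. Your route has three advantages:
\begin{itemize}
\item it is self-contained, with no external Goldstein theorem;
\item it uses only the elementary inclusion ``$\supseteq$'' of Proposition~\ref{prop:bl_chain_rule}, and only at the anchor $x$;
\item it gives a stronger, objective-independent statement. Your Jensen step actually yields $\operatorname{dist}(\jac y_\delta(x),\clarke y(x))\le\mathbb E_\xi[\operatorname{dist}(\jac y(x+\delta\xi),\clarke y(x))]$, an averaged rather than worst-case modulus, which is only then bounded by $\omega_y^c(x,\delta)$.
\end{itemize}
The paper's route buys brevity, since the Goldstein representation is already available from the anchored-model interpretation.

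Two minor points. First, the Frobenius projection is unnecessary. A nearest point in any matrix norm dominating the operator norm (including the spectral norm itself) exists by compactness, and Jensen holds for any norm-induced distance, so your bound holds with whichever norm $\omega_y^c$ is meant to use. Second, for $\delta=\bar\delta$ the dominating constant in Step~1 should be the Lipschitz constant of $y$ on a slightly larger compact set $\mathcal X_{\bar\delta}+\eta\B_n\subseteq\mathcal N$. Such an $\eta>0$ exists because $\mathcal N$ is open.
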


\begin{proof}
By Proposition~\ref{prop:anchored_model_interpretation},
$$g_\delta(x) \in d_x(x) + \conv \left( \bigcup_{\|s-x\|\le\delta} \left\{ V^\top d_y(x):V\in\clarke y(s) \right\} \right).$$
Hence $g_\delta(x)$ can be represented as a convex combination of elements of the form $d_x(x)+V^\top d_y(x)$ with $V\in\clarke y(s)$ and $\|s-x\|\le\delta$.

Since the Clarke generalized Jacobian $\clarke y(x)$ is a compact set, the distance $\operatorname{dist}(V,\clarke y(x))$ is attained. Thus, there exists $\bar V\in\clarke y(x)$ such that
$$\|V-\bar V\| = \operatorname{dist}(V,\clarke y(x)) \le \omega_y^c(x,\delta).$$
It follows that
$$
\begin{aligned}
&\operatorname{dist} \left( d_x(x)+V^\top d_y(x), \partial^c\tilde\varphi(x) \right) \\
&\le \left\| d_x(x)+V^\top d_y(x) - \left(d_x(x)+\bar V^\top d_y(x)\right) \right\| \\
&= \left\| (V-\bar V)^\top d_y(x) \right\| \\
&\le \|d_y(x)\|\omega_y^c(x,\delta).
\end{aligned}
$$

Since $\partial^c\tilde\varphi(x)$ is convex and $g_\delta(x)$ is a convex combination of such anchored chain-rule elements, the same upper bound holds for $g_\delta(x)$, yielding
$$\operatorname{dist}\left(g_\delta(x), \partial^c\tilde\varphi(x)\right) \le \|d_y(x)\|\,\omega_y^c(x,\delta).$$

Finally, the upper semicontinuity of the set-valued mapping $\clarke y(\cdot)$ implies
$$\omega_y^c(x,\delta)\to 0 \qquad \text{as }\delta\downarrow 0$$
for every fixed $x$. This completes the proof.
\end{proof}

If, in addition, the Clarke generalized Jacobian has a local Lipschitz-type
set variation at $x$, namely if there exist constants $L_{J,c}(x)>0$ and
$\rho_x>0$ such that
\begin{equation}
\label{eq:lipschitz_set_variation}
\sup_{V\in\clarke y(s)}
\operatorname{dist}
\left(
V,\clarke y(x)
\right)
\le
L_{J,c}(x)\|s-x\|,
\,
\forall s\in B_{\rho_x}(x),
\end{equation}
then, for every $0<\delta\le\rho_x$,
\begin{equation}
\label{eq:lipschitz_local_set_variation}
\omega_y^c(x,\delta)
\le
L_{J,c}(x)\delta.  
\end{equation}
Consequently, Proposition~\ref{prop:pointwise_clarke_bias} yields
\begin{equation}
\label{eq:pointwise_clarke_bias_linear_rate}
\operatorname{dist}
\left(
g_\delta(x),
\partial^c\tilde\varphi(x)
\right)
\le
\|d_y(x)\|L_{J,c}(x)\delta.
\end{equation}
Thus, the Bi-ZOL approximate hypergradient achieves a tight $O(\delta)$ pointwise structural bias whenever the local expansion of the Clarke generalized Jacobian set is linearly controlled.

The pointwise structural bias in Proposition~\ref{prop:pointwise_clarke_bias}
is controlled by the local movement of the generalized Jacobian set
$\clarke y(\cdot)$ and is measured by $\omega_y^c(x,\delta)$. 
The next results identify structured response mappings for which this
variation is controlled under a local $\delta$-regularity condition. 
We first give a piecewise $C^{1,1}$ bound, and then record the stronger zero-bias specialization for piecewise affine response mappings.

\begin{definition}[$\delta$-regularity for piecewise smooth responses] \label{def:delta_regular_piecewise_response} 
Suppose that, on a neighborhood of $\mathcal X$, the response map $y$ is continuous and piecewise smooth over a finite partition $\{\mathcal R_i\}_{i=1}^N$, namely \( y(u)=y_i(u),\, u\in \mathcal R_i, \) where each $y_i$ is smooth on a neighborhood of $\overline{\mathcal R_i}$. 
Define 
\[ I(x)\coloneqq\{i:x\in\overline{\mathcal R_i}\}, 
\qquad 
I_\delta(x)\coloneqq \{i:\mathcal R_i\cap B_\delta(x)\neq\emptyset\}. 
\] 
We say that $x$ is $\delta$-regular with respect to this partition if 
$I_\delta(x)\subseteq I(x).$ 
\end{definition}

Definition~\ref{def:delta_regular_piecewise_response} means the smoothing ball intersects only pieces that are already active at the anchor point.
The following proposition shows that $\delta$-regularity converts the piecewise \(C^{1,1}\) response into the bound needed in \eqref{eq:lipschitz_set_variation}.
\begin{proposition}[Piecewise $C^{1,1}$ response mappings] \label{prop:piecewise_c11_response_variation} 
Suppose that the response map $y$ is continuous and piecewise $C^{1,1}$ over a finite partition $\{\mathcal R_i\}_{i=1}^N$. 
Assume that, for each active piece, the Jacobian of $y_i$ satisfies 
\[ \|\jac y_i(s)-\jac y_i(t)\|\le L_i\|s-t\| \] 
whenever $s,t$ lie in a neighborhood of \(\overline{\mathcal R_i}\). 
If $x$ is $\delta$-regular, then 
\begin{equation} 
\label{eq:piecewise_c11_variation_bound} 
\omega_y^c(x,\delta) \le \left(\max_{i\in I(x)}L_i\right)\delta . \end{equation} 
Consequently, 
\begin{equation} 
\label{eq:piecewise_c11_clarke_bias} 
\operatorname{dist} \left( g_\delta(x),\partial^c\tilde\varphi(x) \right) 
\le
\|d_y(x)\| \left(\max_{i\in I(x)}L_i\right)\delta . 
\end{equation} 
\end{proposition}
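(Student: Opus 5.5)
The plan is to bound $\omega_y^c(x,\delta)$ directly from Definition~\ref{def:local_set_variation}. The second claim \eqref{eq:piecewise_c11_clarke_bias} then follows by inserting this bound into Proposition~\ref{prop:pointwise_clarke_bias}.

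\textbf{Step 1: Clarke Jacobians of a piecewise response.} For a continuous piecewise smooth map, I would first show that for any point $s$ in the region,
\[
\clarke y(s)\subseteq \conv\{\jac y_i(s): i\in I(s)\}.
\]
To see this, take a sequence $s^k\to s$ of differentiability points with $\jac y(s^k)\to V$. Since the partition is finite, we may pass to a subsequence lying in a single piece $\mathcal R_i$. I will assume, as is standard for such partitions, that the differentiability points relevant for limiting Jacobians lie in the interiors of the pieces. On the interior of $\mathcal R_i$ we have $\jac y(s^k)=\jac y_i(s^k)$. Continuity of $\jac y_i$ on a neighborhood of $\overline{\mathcal R_i}$ then gives $V=\jac y_i(s)$ with $s\in\overline{\mathcal R_i}$, so $i\in I(s)$. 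Taking convex hulls yields the inclusion.

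For the reverse direction at the anchor, I would use that for each $i\in I(x)$ the point $x$ is a limit of interior points of $\mathcal R_i$. This gives $\jac y_i(x)\in\clarke y(x)$, and hence
\[
\clarke y(x)\supseteq\conv\{\jac y_i(x): i\in I(x)\}.
\]
This is the step I expect to be the main obstacle. It requires a mild nondegeneracy of the partition: each closed piece containing $x$ must be the closure of its interior, or at least $\jac y_i(x)$ must be realized as a limiting Jacobian. I would state this as part of the standing meaning of ``partition into pieces'', i.e. full-dimensional pieces that are regular closed sets. For instance, if $\mathcal R_i$ has nonempty interior accumulating at $x$, then $\jac y(s^k)=\jac y_i(s^k)\to\jac y_i(x)$.

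\textbf{Step 2: using $\delta$-regularity.} Fix $s$ with $\|s-x\|\le\delta$ and $V\in\clarke y(s)$. By Step 1, $V=\sum_j\lambda_j\jac y_j(s)$, where the sum runs over $j\in I(s)$. Each such $j$ has $s\in\overline{\mathcal R_j}$, so $\mathcal R_j$ meets $B_\delta(x)$, possibly after enlarging the ball to the closed one or approximating $s$ by interior points. Hence $j\in I_\delta(x)\subseteq I(x)$.

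Now set $\bar V=\sum_j\lambda_j\jac y_j(x)$. By the reverse inclusion in Step 1, $\bar V\in\clarke y(x)$. Moreover,
\[
\|V-\bar V\|\le\sum_j\lambda_j\|\jac y_j(s)-\jac y_j(x)\|\le\Big(\max_{i\in I(x)}L_i\Big)\|s-x\|\le\Big(\max_{i\in I(x)}L_i\Big)\delta.
\]
Here the $C^{1,1}$ hypothesis applies because $s$ and $x$ both lie in $\overline{\mathcal R_j}$, which is inside the neighborhood where $\jac y_j$ is $L_j$-Lipschitz.

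\textbf{Step 3: conclusion.} Taking the supremum over $s$ and $V$ gives \eqref{eq:piecewise_c11_variation_bound}. Multiplying by $\|d_y(x)\|$ and invoking Proposition~\ref{prop:pointwise_clarke_bias} gives \eqref{eq:piecewise_c11_clarke_bias}.

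The boundary subtleties of Step 2 are minor. They concern $s$ on the sphere $\|s-x\|=\delta$, where one must check that the pieces containing $s$ still meet the open ball $B_\delta(x)$. I would handle them by the interior-approximation convention or by reading $B_\delta$ as closed, consistent with the definition of $\omega_y^c$.
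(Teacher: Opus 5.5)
Your proposal follows the paper's proof step for step. You represent $V\in\clarke y(s)$ as a convex combination of $\jac y_j(s)$ over pieces active at $s$, and you use $\delta$-regularity to get $I(s)\subseteq I_\delta(x)\subseteq I(x)$. You then move the same weights to $\bar V=\sum_j\lambda_j\jac y_j(x)\in\clarke y(x)$ and apply the Lipschitz bound on each $\jac y_j$. The nondegeneracy conditions you state in Step~1 (negligible piece boundaries, and pieces active at $x$ being essentially active) are exactly what the paper leaves implicit. It appeals to the ``standard'' representation and asserts $\bar V\in\clarke y(x)$ without proof.

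The one point to correct is your handling of $\|s-x\|=\delta$. Suppose an open piece $\mathcal R_j$ lies outside $\bar B_\delta(x)$ but its closure touches the sphere at $s$, for example an open disk tangent to the sphere from outside. Then $\jac y_j(s)\in\clarke y(s)$, while $j\notin I_\delta(x)$ whichever way you read the ball. Approximating $s$ from inside the ball cannot recover this element, because upper semicontinuity of $\clarke y$ only controls nearby sets by $\clarke y(s)$, not the converse. Now take $y\equiv 0$ off $\mathcal R_j$, and take $y_j$ vanishing on the boundary circle with $\jac y_j(s)\neq 0$. Then $\max_{i\in I(x)}L_i=0$, yet $\omega_y^c(x,\delta)\ge\|\jac y_j(s)\|>0$, so the first inequality genuinely fails.

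You therefore need a slightly stronger hypothesis: $\operatorname{dist}(x,\mathcal R_i)\le\delta\Rightarrow i\in I(x)$. This holds, for instance, if $x$ is $\delta'$-regular for some $\delta'>\delta$. The paper's proof has the same blind spot.
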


\begin{proof} Fix $s\in B_\delta(x)$ and $V\in\clarke y(s)$. 
For a continuous piecewise \(C^{1,1}\) mapping, every element of $\clarke y(s)$ can be written as a convex combination of limiting Jacobians of pieces active at $s$. 
Hence there exist coefficients $\alpha_i\ge 0$, with $\sum_i\alpha_i=1$, such that 
\[ V=\sum_{i\in I(s)}\alpha_i \jac y_i(s). \] 
Since $x$ is $\delta$-regular and $s\in B_\delta(x)$, we have \(I(s)\subseteq I_\delta(x)\subseteq I(x)\). 
Therefore, 
\[ \bar V\coloneqq \sum_{i\in I(s)}\alpha_i \jac y_i(x) \] 
belongs to $\clarke y(x)$. 
It follows that 
\[ 
\begin{aligned} 
\operatorname{dist}(V,\clarke y(x)) &\le \|V-\bar V\| \\ 
&\le \sum_{i\in I(s)}\alpha_i \|\jac y_i(s)-\jac y_i(x)\| \\ 
&\le \left(\max_{i\in I(x)}L_i\right)\|s-x\|. 
\end{aligned} 
\] 
Taking the supremum over $s\in B_\delta(x)$ and $V\in\clarke y(s)$ gives \eqref{eq:piecewise_c11_variation_bound}. 
The bias bound \eqref{eq:piecewise_c11_clarke_bias} follows from Proposition~\ref{prop:pointwise_clarke_bias}. 
\end{proof}

A sharper picture is available when the response map is piecewise affine. In this case, the local Jacobians are constant on polyhedral cells, and the smoothing operation only
averages a finite set of affine sensitivities. This leads to exact pointwise
Clarke consistency in two important regimes.

\begin{corollary}[Piecewise affine (PWA) response mappings]
\label{cor:pwa_response_zero_bias}
Suppose that, on a neighborhood of $\mathcal X$, the response map $y$ is
continuous piecewise affine over a finite polyhedral partition
$\{\mathcal R_i\}_{i=1}^N$, namely
\[
y(u)=A_i u+b_i,\qquad u\in \mathcal R_i .
\]
Let $I(x)$ and $I_\delta(x)$ be defined as in
Definition~\ref{def:delta_regular_piecewise_response}. 
Then
\[
\begin{aligned}
\clarke y(x)&=\operatorname{conv}\{A_i:i\in I(x)\}, \\
\jac y_\delta(x)&\in
\operatorname{conv}\{A_i:i\in I_\delta(x)\}.    
\end{aligned}
\]
Consequently,
\[
g_\delta(x)
\in
d_x(x)+
\operatorname{conv}\{A_i^\top d_y(x):i\in I_\delta(x)\}.
\]
If $x$ is $\delta$-regular, i.e.,
\(
I_\delta(x)\subseteq I(x),
\)
then
\(
\omega_y^c(x,\delta)=0,
\,
g_\delta(x)\in\partial^c\tilde\varphi(x),
\)
which means the approximate hypergradient $g_\delta(x)$ exactly belongs to Clarke chain-rule subdifferential $\partial^c \tilde \varphi(x)$:
\[
\operatorname{dist}\left(g_\delta(x),\partial^c\tilde\varphi(x)\right)=0.
\]
Moreover, if $x$ lies in the interior of a single affine cell
$\mathcal R_{i_x}$, then for every
\(
0<\delta<
\operatorname{dist}\left(x,\mathbb R^n\setminus \mathcal R_{i_x}\right),
\)
we have
\[
g_\delta(x)
=
d_x(x)+A_{i_x}^\top d_y(x)
=
\nabla\tilde\varphi(x).
\]
\end{corollary}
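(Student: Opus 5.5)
The plan is to obtain the four claims of Corollary~\ref{cor:pwa_response_zero_bias} in order. Each step is a specialization of earlier results (Propositions~\ref{prop:bl_chain_rule}, \ref{prop:anchored_model_interpretation}, \ref{prop:pointwise_clarke_bias}) to the case where all piece Jacobians are constant.

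\emph{Step 1: the formula for $\clarke y(x)$.} The pieces are finitely many polyhedra covering a neighborhood of $x$, and their boundaries have Lebesgue measure zero. Hence almost every point near $x$ lies in the interior of some cell $\mathcal R_i$, where $\jac y=A_i$. Every limiting Jacobian at $x$ is therefore some $A_i$ with $x\in\overline{\mathcal R_i}$, which gives the inclusion $\subseteq$. Conversely, for $i\in I(x)$ we use that a nonempty polyhedral cell with $x$ in its closure has interior points arbitrarily close to $x$. To ensure this, we assume, as is standard for such partitions, that the cells are full-dimensional; lower-dimensional pieces are measure zero and can be discarded. Picking such interior points gives the limit $A_i$, which yields $\supseteq$ after taking convex hulls.

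\emph{Step 2: the inclusion for $\jac y_\delta(x)$.} Since $y$ is Lipschitz on $\mathcal X+\bar\delta\B_n$, we can differentiate under the integral:
\[
\jac y_\delta(x)=\mathbb E_{\xi\sim\mathrm{Unif}(\B_n)}[\jac y(x+\delta\xi)].
\]
Here $\jac y$ exists a.e.\ (Rademacher), and off the measure-zero cell boundaries it equals $A_i$ on $\operatorname{int}\mathcal R_i\cap B_\delta(x)$. The expectation therefore equals $\sum_i p_i A_i$ with $p_i=\mathrm{vol}(\mathcal R_i\cap B_\delta(x))/\mathrm{vol}(B_\delta)$. Only indices with $p_i>0$ appear, and these lie in $I_\delta(x)$; the $p_i$ sum to one. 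Multiplying by $d_y(x)^\top$ and adding $d_x(x)$ gives the stated inclusion for $g_\delta(x)$ via the definition~\eqref{eq:approx_hypergradient}. This is the one step needing a little care, namely justifying the interchange of differentiation and expectation. I would do it by the standard identity $\jac y_\delta=\mathbb E[\jac y(\cdot+\delta\xi)]$ for convolution of a Lipschitz function with the uniform ball density, i.e.\ the weak derivative of a Lipschitz function is its a.e.\ derivative.

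\emph{Step 3: $\delta$-regular case.} Suppose $I_\delta(x)\subseteq I(x)$. Then $\jac y_\delta(x)\in\conv\{A_i:i\in I(x)\}=\clarke y(x)$ by Step~1, so $g_\delta(x)\in\partial^c\tilde\varphi(x)$ by Proposition~\ref{prop:bl_chain_rule}, and the distance is zero. For $\omega_y^c(x,\delta)=0$, take $s$ with $\|s-x\|\le\delta$ and apply Step~1 at $s$. This gives $\clarke y(s)=\conv\{A_i:i\in I(s)\}$, and $I(s)\subseteq I_\delta(x)\subseteq I(x)$, so $\clarke y(s)\subseteq\clarke y(x)$. (For $s$ on the boundary of the closed ball, one should read $B_\delta$ consistently as in the definition, or use $\delta$ slightly smaller; I would note this point. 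Alternatively, Proposition~\ref{prop:piecewise_c11_response_variation} with $L_i=0$ gives the result directly.)

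\emph{Step 4: interior case.} If $B_\delta(x)\subseteq\mathcal R_{i_x}$, then $I_\delta(x)=\{i_x\}$ up to measure-zero intersections, so $\jac y_\delta(x)=A_{i_x}$. Moreover $y$ is affine, hence $C^1$, near $x$, so the classical chain rule~\eqref{eq:classical_hypergradient} gives $\nabla\tilde\varphi(x)=d_x(x)+A_{i_x}^\top d_y(x)=g_\delta(x)$.
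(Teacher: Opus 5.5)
Your proposal is correct and follows essentially the same route as the paper's proof: the standard piecewise-affine description of $\clarke y(x)$, the smoothed Jacobian $\jac y_\delta(x)$ as a volume-weighted average of the $A_i$ over cells meeting $B_\delta(x)$, $\delta$-regularity giving $\jac y_\delta(x)\in\clarke y(x)$, and the interior-cell case via the classical chain rule. The paper leaves implicit the points you make explicit: full-dimensional cells, differentiation under the integral, deriving $g_\delta(x)\in\partial^c\tilde\varphi(x)$ directly from Proposition~\ref{prop:bl_chain_rule}, and checking $\omega_y^c(x,\delta)=0$ separately via $\clarke y(s)\subseteq\clarke y(x)$ with the open-versus-closed-ball caveat; the paper simply calls $\jac y_\delta(x)\in\clarke y(x)$ ``equivalent'' to $\omega_y^c(x,\delta)=0$, and your separate argument handles that step more carefully.
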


\begin{proof}
The Clarke generalized Jacobian formula follows from the standard
representation of continuous piecewise affine mappings. 
Since the Jacobian is
constant on each polyhedral cell, the smoothed Jacobian $\jac y_\delta(x)$
is an average of the matrices $A_i$ over the cells intersected by
$B_\delta(x)$, which gives the stated inclusion.

If \(x\) is \(\delta\)-regular, then \(I_\delta(x)\subseteq I(x)\), and hence
\[
\jac y_\delta(x)\in
\operatorname{conv}\{A_i:i\in I(x)\}
=
\clarke y(x).
\]
Equivalently, \(\omega_y^c(x,\delta)=0\). The zero-bias claim then follows
from Proposition~\ref{prop:pointwise_clarke_bias}. The interior-cell claim is
the special case in which \(B_\delta(x)\) remains inside a single affine cell,
so \(\jac y_\delta(x)=A_{i_x}\) and the classical chain rule applies.
\end{proof}

A geometric interpretation of Corollary~\ref{cor:pwa_response_zero_bias} is plotted in Fig.~\ref{fig:PWA_two_panel}.
Corollary~\ref{cor:pwa_response_zero_bias} shows that the general
pointwise bias bound in Proposition~\ref{prop:pointwise_clarke_bias} can be
conservative for structured response maps.
This observation is particularly relevant because continuous PWA response
mappings arise naturally in many practical problems. 
A prominent
example is the solution map of strongly convex multi-parametric quadratic
programs (mpQPs) with affine constraints, which underlies explicit model
predictive control \cite{BemporadMorariDuaPistikopoulos2002,RawlingsMayneDiehl2017}. Similar
structures appear in power-system applications, where lower-level agents
solve constrained quadratic programs for demand response and energy
consumption scheduling \cite{Samadi2012}. In such settings, Bi-ZOL enjoys
a substantially stronger geometric interpretation than that suggested by
the general structural bias bound.

\begin{figure}[htbp]
    \centering
    \includegraphics[width=\columnwidth]{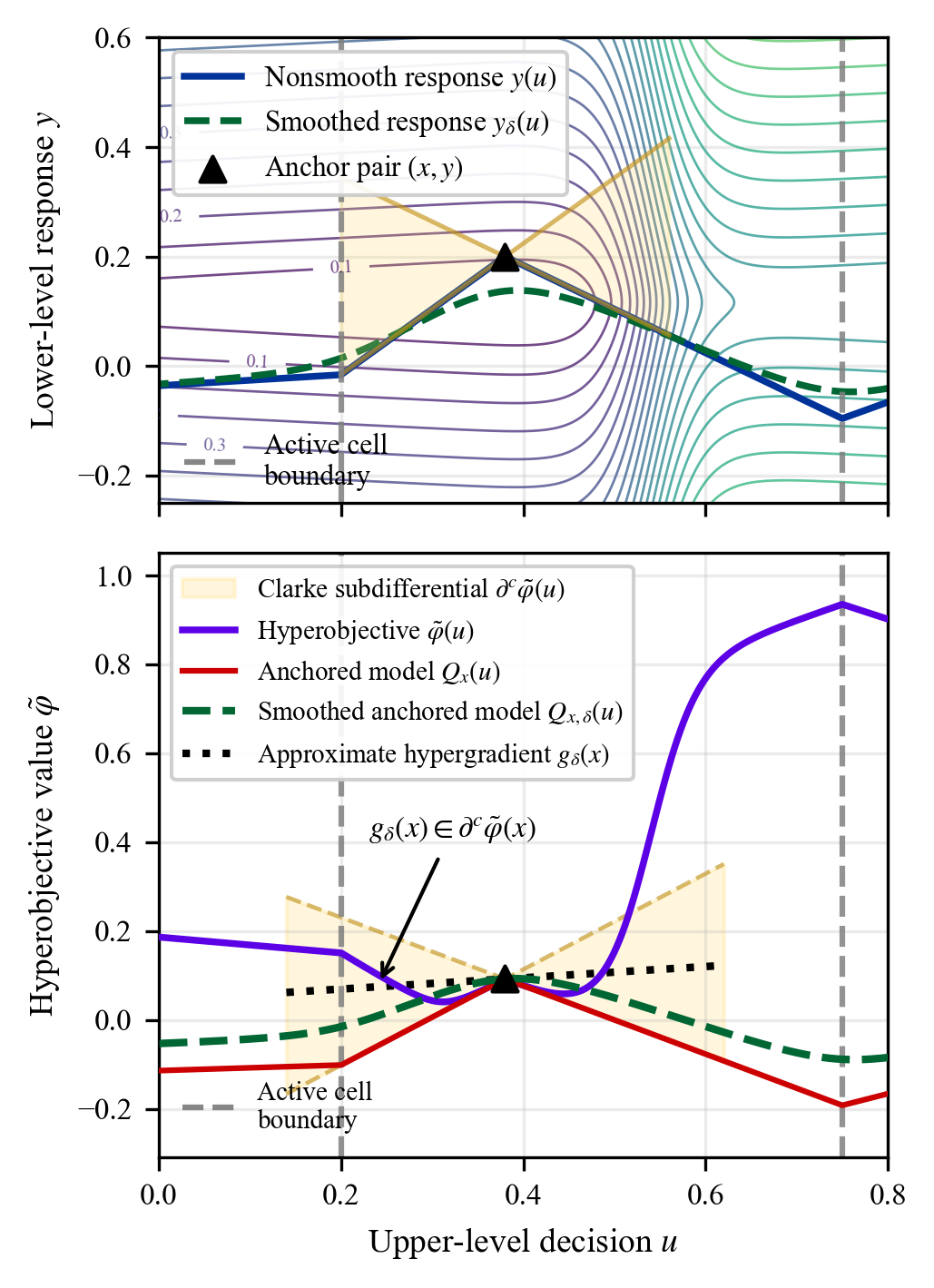}
    \caption{Visualization of the Bi-ZOL zero structural bias property at a non-differentiable switching point under a PWA response mapping. The top panel shows the PWA lower-level response $y(u)$ and its uniform smoothing $y_\delta(u)$ against the upper-level contours. 
    The bottom panel demonstrates that at the anchor point $x$, the gradient $g_\delta(x)$ of the smoothed anchored model $Q_{x,\delta}(u)$ falls exactly within the Clarke subdifferential $\partial^c\tilde\varphi(x)$ (yellow region) of the true hyperobjective. This visually verifies Corollary~\ref{cor:pwa_response_zero_bias}: $g_\delta(x) \in \partial^c\tilde\varphi(x)$ when the smoothing neighborhood $B_\delta(x)$ only intersects active cells.}
    \label{fig:PWA_two_panel}
\end{figure}

\subsection{Bi-ZOL Stationarity Certificates}
\label{sec:bizol_stationarity_certificates}

We now collect the stationarity certificates used to evaluate the
Bi-ZOL algorithm. 
Since the upper-level feasible set is constrained, we use
Frank--Wolfe gaps rather than unconstrained gradient norms.

When the reduced hyperobjective is differentiable, the standard stationarity
certificate for constrained smooth optimization is the Frank--Wolfe gap. We
recall this smooth certificate first, since it will serve as the reference case
for the Bi-ZOL certificate.

\begin{definition}[Smooth Frank--Wolfe stationarity certificate]
\label{def:smooth_fw_stationarity}
Suppose that the reduced hyperobjective \(\tilde\varphi(x)\) is differentiable at
\(x\in\mathcal X\). Define the smooth Frank--Wolfe gap by
\begin{equation}
\label{eq:smooth_fw_gap}
\mathcal G^{\rm sm}(x)
\coloneqq
\max_{z\in\mathcal X}
\left\langle z-x,-\nabla\tilde\varphi(x)\right\rangle .
\end{equation}
For \(\epsilon\ge 0\), we call \(x\in\mathcal X\) an
\(\epsilon\)-smooth Frank--Wolfe stationary point if
\begin{equation}
\label{eq:epsilon_smooth_fw_stationary}
\mathcal G^{\rm sm}(x)\le \epsilon .
\end{equation}
\end{definition}

For nonsmooth hyperobjectives, the gradient in the smooth
Frank--Wolfe gap is replaced by a Clarke subgradient. This gives the following
Clarke Frank--Wolfe certificate for the original bilevel reduced problem \cite{Liu2024zeroth}.

\begin{definition}[Clarke Frank--Wolfe stationarity certificate]
\label{def:clarke_fw_stationarity}
For \(x\in\mathcal X\), define the Clarke Frank--Wolfe gap of the original
reduced hyperobjective by
\begin{equation}
\label{eq:bl_fw_gap}
\mathcal G^c(x)
\coloneqq
\min_{g\in\BiC(x)}
\max_{z\in\mathcal X}
\left\langle z-x,-g\right\rangle .
\end{equation}
For \(\epsilon\ge 0\), we call \(x\in\mathcal X\) an
\(\epsilon\)-Clarke Frank--Wolfe stationary point if
\begin{equation}
\label{eq:epsilon_clarke_fw_stationary}
\mathcal G^c(x)\le \epsilon .
\end{equation}
\end{definition}

Based on these two state-of-art Frank--Wolfe stationarities, we define the certificate for our algorithm.
\begin{definition}[Bi-ZOL Frank--Wolfe stationarity certificate]
\label{def:bizol_fw_stationarity}
For \(x\in\mathcal X\) and \(0<\delta\le\bar\delta\), define the Bi-ZOL
Frank--Wolfe gap by
\begin{equation}
\label{eq:bizol_fw_gap}
\mathcal G_\delta(x)
\coloneqq
\max_{z\in\mathcal X}
\left\langle z-x,-g_\delta(x)\right\rangle .
\end{equation}
For \(\epsilon\ge 0\), we call \(x\in\mathcal X\) a
\((\delta,\epsilon)\)-Bi-ZOL stationary point if
\begin{equation}
\label{eq:delta_epsilon_bizol_stationary}
\mathcal G_\delta(x)\le \epsilon .
\end{equation}
\end{definition}

The Clarke certificate measures first-order stationarity of the original
reduced hyperobjective, while the Bi-ZOL certificate is the algorithmic
certificate induced by the approximate hypergradient \(g_\delta(x)\). The
following proposition connects these two certificates.

\begin{proposition}[Stationarity transfer]
\label{prop:stationarity_transfer}
Under Assumptions~\ref{ass:response}, \ref{ass:feasible}, and
\ref{ass:upper_regular}, for every \(x\in\mathcal X\) and every
\(0<\delta\le\bar\delta\),
\begin{equation}
\label{eq:stationarity_transfer_bias}
\mathcal G^c(x)
\le
\mathcal G_\delta(x)
+
D\|d_y(x)\|\,\omega_y^c(x,\delta),
\end{equation}
where $D=\operatorname{diam}(\mathcal X)$.

In particular, for a target accuracy \(\epsilon>0\), suppose that \(x\) is a
\((\delta,2\epsilon/3)\)-Bi-ZOL stationary point and that
\begin{equation}
\label{eq:stationarity_transfer_variation_condition}
D\|d_y(x)\|\,\omega_y^c(x,\delta)
\le
\frac{\epsilon}{3}.
\end{equation}
Then \(x\) is an \(\epsilon\)-Clarke Frank--Wolfe stationary point.

If, in addition, the local set variation satisfies
\(
\omega_y^c(x,\delta)\le L_{J,c}(x)\delta,
\)
then if $\|d_y(x)\|L_{J,c}(x)>0$, e.g., piecewise $C^{1,1}$ response with $\delta$-regularity, the condition in \eqref{eq:stationarity_transfer_variation_condition} is equivalent to 
\[ \delta \le \frac{\epsilon}{3D\|d_y(x)\|L_{J,c}(x)}. \] 
If $\|d_y(x)\|L_{J,c}(x)=0$, e.g., piecewise affine response with $\delta$-regularity, the structural-bias term vanishes and no extra upper bound on \(\delta\) is needed.
\end{proposition}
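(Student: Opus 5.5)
The plan is to combine Proposition~\ref{prop:pointwise_clarke_bias} with a Lipschitz property of the support-type function that defines the Frank--Wolfe gap. For fixed $x\in\mathcal X$, define $h_x(g)\coloneqq\max_{z\in\mathcal X}\langle z-x,-g\rangle$. This is a maximum of linear functions of $g$, so it is convex and finite. For any $g,g'$ we have
\[
h_x(g)\le h_x(g')+\max_{z\in\mathcal X}\langle z-x,g'-g\rangle\le h_x(g')+D\|g-g'\|,
\]
using Cauchy--Schwarz and $\|z-x\|\le D$ for $z,x\in\mathcal X$ (Assumption~\ref{ass:feasible}). Hence $h_x$ is $D$-Lipschitz. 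Note also that $\mathcal G^c(x)=\min_{g\in\BiC(x)}h_x(g)$ and $\mathcal G_\delta(x)=h_x(g_\delta(x))$.

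Next, observe that $\BiC(x)$ is nonempty, convex and compact. This follows from Proposition~\ref{prop:bl_chain_rule}, since it is the image of the compact convex set $\clarke y(x)$ under an affine map. Therefore the minimum in \eqref{eq:bl_fw_gap} is attained, and there is a projection $\bar g\in\BiC(x)$ with $\|g_\delta(x)-\bar g\|=\operatorname{dist}(g_\delta(x),\BiC(x))$. Then
\[
\mathcal G^c(x)\le h_x(\bar g)\le h_x(g_\delta(x))+D\|g_\delta(x)-\bar g\|\le \mathcal G_\delta(x)+D\|d_y(x)\|\,\omega_y^c(x,\delta),
\]
where the last step is exactly Proposition~\ref{prop:pointwise_clarke_bias}. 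This proves \eqref{eq:stationarity_transfer_bias}.

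The remaining claims are arithmetic. If $\mathcal G_\delta(x)\le 2\epsilon/3$ and \eqref{eq:stationarity_transfer_variation_condition} holds, then $\mathcal G^c(x)\le\epsilon$.

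Under $\omega_y^c(x,\delta)\le L_{J,c}(x)\delta$, a sufficient condition for \eqref{eq:stationarity_transfer_variation_condition} is $D\|d_y(x)\|L_{J,c}(x)\delta\le\epsilon/3$. When $\|d_y(x)\|L_{J,c}(x)>0$, this rearranges to the stated bound on $\delta$. In the piecewise $C^{1,1}$ case with $\delta$-regularity, the hypothesis is supplied by Proposition~\ref{prop:piecewise_c11_response_variation} with $L_{J,c}(x)=\max_{i\in I(x)}L_i$. When the product vanishes, the bias term is zero and no restriction on $\delta$ is needed. In the piecewise affine case, Corollary~\ref{cor:pwa_response_zero_bias} gives $\omega_y^c(x,\delta)=0$ directly.

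One point of care is the stated ``equivalence''. The variation bound is only an upper bound on $\omega_y^c$, so the $\delta$-condition is sufficient for \eqref{eq:stationarity_transfer_variation_condition}, not necessary. I would phrase it as equivalence to the linearized sufficient condition $D\|d_y(x)\|L_{J,c}(x)\delta\le\epsilon/3$.

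I expect no step to be a genuine obstacle. The only points needing care are the attainment and compactness of $\BiC(x)$, so that the minimum defining $\mathcal G^c$ makes sense, and checking that the Lipschitz constant of $h_x$ is exactly $D$ rather than $2D$.
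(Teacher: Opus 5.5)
Your proposal is correct and follows essentially the same route as the paper. The paper also takes $\bar g$ as the closest point of $\partial^c\tilde\varphi(x)$ to $g_\delta(x)$, bounds $\max_{z}\langle z-x,-\bar g\rangle$ by $\mathcal G_\delta(x)+D\|g_\delta(x)-\bar g\|$, and invokes Proposition~\ref{prop:pointwise_clarke_bias}; your extra remarks are valid refinements the paper glosses over, namely that the minimum defining $\mathcal G^c$ is attained by compactness, and that the $\delta$-bound is only sufficient for \eqref{eq:stationarity_transfer_variation_condition} rather than equivalent to it.
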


\begin{proof}
Let \(\bar g\in\partial^c\tilde\varphi(x)\) be a closest element to
\(g_\delta(x)\). By the definition of \(\mathcal G^c(x)\),
\[
\begin{aligned}
\mathcal G^c(x)
&\le
\max_{z\in\mathcal X}
\left\langle z-x,-\bar g\right\rangle \\
&\le
\max_{z\in\mathcal X}
\left\langle z-x,-g_\delta(x)\right\rangle
+
\max_{z\in\mathcal X}
\left\langle z-x,g_\delta(x)-\bar g\right\rangle \\
&\le
\mathcal G_\delta(x)
+
D\left\|g_\delta(x)-\bar g\right\|.
\end{aligned}
\]
Combining Proposition~\ref{prop:pointwise_clarke_bias} gives
\eqref{eq:stationarity_transfer_bias}.

If \(x\) is a \((\delta,2\epsilon/3)\)-Bi-ZOL stationary point, then
\(\mathcal G_\delta(x)\le 2\epsilon/3\). If
\eqref{eq:stationarity_transfer_variation_condition} also holds, then
\[
\mathcal G^c(x)
\le
\mathcal G_\delta(x)
+
D\|d_y(x)\|\,\omega_y^c(x,\delta)
\le
\frac{2\epsilon}{3}
+
\frac{\epsilon}{3}
=
\epsilon .
\]
Thus \(x\) is an \(\epsilon\)-Clarke Frank--Wolfe stationary point. Finally, if
\(\omega_y^c(x,\delta)\le L_{J,c}(x)\delta\), then subtituding $\omega_y^c(x,\delta)$ proves the last claim.
\end{proof}

Proposition~\ref{prop:stationarity_transfer} is a pointwise stationarity transfer. 
The convergence analysis of Bi-ZOL controls the Bi-ZOL Frank--Wolfe gap $\mathcal G_\delta$. 
The proposition does not claim that Bi-ZOL generally obtains Clarke Frank--Wolfe stationarity in finite time for arbitrary nonsmooth response maps. 
Instead, it states that when the output point also satisfies certain conditions of local generalized-Jacobian, the Bi-ZOL certificate induces a Clarke Frank--Wolfe certificate for the original reduced hyperobjective. 
This condition is satisfied, for example, at $\delta$-regular points of finite piecewise $C^{1,1}$ response mappings; piecewise affine response mappings form the zero-bias special case.

We next record the smooth-response interpretation of the Bi-ZOL certificate.
This result explains how the Bi-ZOL gap reduces to the standard smooth
Frank--Wolfe gap when the lower-level response is smooth.

\begin{proposition}[Smooth-response reduction of Bi-ZOL stationarity]
\label{prop:smooth_response_bizol_stationarity}
Suppose that the response map \(y(\cdot)\) is smooth and that \(\jac y(\cdot)\) is
\(L_J\)-Lipschitz on the query region.  If \(x\) is a
\((\delta,2\epsilon/3)\)-Bi-ZOL stationary point with
\[
0<\delta\le \frac{\epsilon}{3DM_yL_J},
\]
then \(x\) is an \(\epsilon\)-stationary point in the standard smooth
Frank--Wolfe sense.
\end{proposition}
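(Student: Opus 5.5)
The plan is to reduce this to Proposition~\ref{prop:stationarity_transfer}, or more directly to the elementary Frank--Wolfe perturbation argument used in its proof. When $y(\cdot)$ is smooth, the Clarke generalized Jacobian is the singleton $\clarke y(x)=\{\jac y(x)\}$. By Proposition~\ref{prop:bl_chain_rule}, $\partial^c\tilde\varphi(x)=\{\nabla\tilde\varphi(x)\}$, given by the classical chain rule \eqref{eq:classical_hypergradient}. Hence $\mathcal G^c(x)=\mathcal G^{\rm sm}(x)$, and it suffices to bound $\|g_\delta(x)-\nabla\tilde\varphi(x)\|$.

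The key step is the bias bound
\[
\|g_\delta(x)-\nabla\tilde\varphi(x)\|
=\left\|\left(\jac y_\delta(x)-\jac y(x)\right)^\top d_y(x)\right\|
\le M_y\,\|\jac y_\delta(x)-\jac y(x)\|.
\]
Since $y$ is smooth on the query region $\mathcal X+\bar\delta\mathbb B_n$, differentiation commutes with the expectation. This gives $\jac y_\delta(x)=\mathbb E_{\xi}[\jac y(x+\delta\xi)]$ for $\xi\sim\mathrm{Unif}(\mathbb B_n)$. By Jensen's inequality and the $L_J$-Lipschitz property of $\jac y$,
\[
\|\jac y_\delta(x)-\jac y(x)\|\le \mathbb E\|\jac y(x+\delta\xi)-\jac y(x)\|\le L_J\delta\,\mathbb E\|\xi\|\le L_J\delta .
\]
Alternatively, one can note that $\omega_y^c(x,\delta)=\sup_{\|s-x\|\le\delta}\|\jac y(s)-\jac y(x)\|\le L_J\delta$ and invoke Proposition~\ref{prop:pointwise_clarke_bias} directly. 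Together with $\|d_y(x)\|\le M_y$ from Assumption~\ref{ass:upper_regular}, this gives a bias of at most $M_yL_J\delta$.

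Finally, I will repeat the splitting from the proof of Proposition~\ref{prop:stationarity_transfer}:
\[
\mathcal G^{\rm sm}(x)\le \mathcal G_\delta(x)+\max_{z\in\mathcal X}\langle z-x,g_\delta(x)-\nabla\tilde\varphi(x)\rangle\le \mathcal G_\delta(x)+DM_yL_J\delta .
\]
Under the hypotheses, $\mathcal G_\delta(x)\le 2\epsilon/3$ and $DM_yL_J\delta\le\epsilon/3$, so $\mathcal G^{\rm sm}(x)\le\epsilon$.

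The only mildly delicate point is justifying the interchange of derivative and expectation. This needs $x+\delta\xi$ to remain in the smooth region, which holds because $\delta\le\bar\delta$ and $\mathcal X_{\bar\delta}\subseteq\mathcal N$. It also needs $\jac y$ to be continuous, hence bounded, on this compact set, so dominated convergence applies. I do not expect any other obstacle.
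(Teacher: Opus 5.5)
Your proposal is correct and follows essentially the same route as the paper: you bound $\|g_\delta(x)-\nabla\tilde\varphi(x)\|\le M_yL_J\delta$ using $\jac y_\delta(x)=\mathbb E_\xi[\jac y(x+\delta\xi)]$ and the $L_J$-Lipschitz property of $\jac y$, then split the Frank--Wolfe gap to get $\mathcal G^{\rm sm}(x)\le\mathcal G_\delta(x)+DM_yL_J\delta\le\epsilon$. Your detour through $\mathcal G^c(x)=\mathcal G^{\rm sm}(x)$ and your justification of differentiating under the expectation are extra but harmless; the paper simply takes the interchange for granted.
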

\begin{proof}
Since \(y(\cdot)\) is smooth, the hyperobjective is differentiable. 
Recall
\[
g_\delta(x)
=
d_x(x)+\jac y_\delta(x)^\top d_y(x),
\,
\jac y_\delta(x)
=
\mathbb E_\xi[\jac y(x+\delta\xi)] .
\]
Thus, by the \(L_J\)-Lipschitz continuity of \(\jac y\),
\begin{equation*}
\begin{aligned}
\|g_\delta(x)-\nabla\tilde\varphi(x)\|
&\le
\|d_y(x)\|
\mathbb E_\xi
\left[
\|\jac y(x+\delta\xi)-\jac y(x)\|
\right]  \\
&\le
M_yL_J\delta .
\end{aligned}
\end{equation*}
By Definition~\ref{def:smooth_fw_stationarity},
\begin{equation*}
\begin{aligned}
\mathcal G^{\rm sm}(x)
&\le
\max_{z\in\mathcal X}
\left\langle z-x,-g_\delta(x)\right\rangle
+
D\|g_\delta(x)-\nabla\tilde\varphi(x)\| \\
&\le
\mathcal G_\delta(x)+DM_yL_J\delta .
\end{aligned}
\end{equation*}
If \(x\) is a \((\delta,2\epsilon/3)\)-Bi-ZOL stationary point and
\(\delta\le \epsilon/(3DM_yL_J)\), then
\[
\max_{z\in\mathcal X}
\left\langle z-x,-\nabla\tilde\varphi(x)\right\rangle
\le
\frac{2\epsilon}{3}+\frac{\epsilon}{3}
=
\epsilon.
\]
\end{proof}

\section{Solution Methods}
\label{sec:solution_methods}

The previous section defines the approximate hypergradient
\[
g_\delta(x)
=
\nabla_1\varphi(x,y(x))
+
\jac y_\delta(x)^\top \nabla_2\varphi(x,y(x)),
\]
which preserves the current upper-level first-order information and surrogates
only the response sensitivity. This section shows how to estimate
\(g_\delta(x)\) using zeroth-order response queries and how the resulting
Frank--Wolfe scheme controls the approximate stationarity measure
\(\mathcal G_\delta(x)\) defined in \eqref{eq:bizol_fw_gap}.

\subsection{Zeroth-order approximation of the smoothed response Jacobian}
\label{subsec:zo_jacobian_approximation}

The direction \(g_\delta(x)\) is not directly computable under the response
oracle model, because the leader does not have access to the smoothed
Jacobian \(\jac y_\delta(x)\). Bi-ZOL estimates this Jacobian using two-point
random perturbations of the lower-level response.

For any \(x\in\mathcal X\) and
\(w\sim{\rm Unif}(\mathbb S^{n-1})\), define
\begin{equation}
\label{eq:single_sample_jac_estimator}
\apjac y_\delta(x;w)
\coloneqq
\frac{n}{2\delta}
\left(
y(x+\delta w)-y(x-\delta w)
\right)w^\top
\in\mathbb R^{m\times n}.
\end{equation}
The corresponding single-sample Bi-ZOL hypergradient estimator is
\begin{equation}
\label{eq:single_sample_hg_estimator}
\widehat g_\delta(x;w)
\coloneqq
\nabla_1\varphi(x,y(x))
+
\apjac y_\delta(x;w)^\top
\nabla_2\varphi(x,y(x)).
\end{equation}
For a mini-batch size \(B\ge 1\), let
\(\{w_b\}_{b=1}^B\) be independent samples from
\({\rm Unif}(\mathbb S^{n-1})\). Define
\begin{equation}
\label{eq:batch_jac_estimator}
\apjac y_{\delta,B}(x)
\coloneqq
\frac{1}{B}
\sum_{b=1}^B
\apjac y_\delta(x;w_b),
\end{equation}
and
\begin{equation}
\label{eq:batch_hg_estimator}
\widehat g_{\delta,B}(x)
\coloneqq
\nabla_1\varphi(x,y(x))
+
\apjac y_{\delta,B}(x)^\top
\nabla_2\varphi(x,y(x)).
\end{equation}
The case \(B=1\) gives the single-sample estimator.

The estimator in \eqref{eq:batch_hg_estimator} only randomizes the response sensitivity term. 
The following lemma shows that this estimator is unbiased for the Bi-ZOL approximate hypergradient \(g_\delta(x)\) and gives the variance bound used in the convergence analysis.
\begin{lemma}[Bi-ZOL hypergradient estimator]
\label{lem:solution_hg_estimator}
Under Assumptions~\ref{ass:response} and~\ref{ass:upper_regular}, for every
\(x\in\mathcal X\),
\begin{equation}
\label{eq:hg_estimator_unbiased}
\mathbb E
\left[
\widehat g_{\delta,B}(x)
\right]
=
g_\delta(x).
\end{equation}
Moreover,
\begin{equation}
\label{eq:hg_estimator_variance}
\mathbb E
\left[
\left\|
\widehat g_{\delta,B}(x)-g_\delta(x)
\right\|^2
\right]
\le
\frac{\sigma_\delta^2}{B},
\end{equation}
where
\begin{equation}
\label{eq:sigma_delta_def}
\sigma_\delta^2
\coloneqq
c_{\rm var}nM_y^2L_y^2,
\qquad
c_{\rm var}=16\sqrt{2\pi}.
\end{equation}
\end{lemma}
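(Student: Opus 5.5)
The plan has three steps: first establish unbiasedness of the single-sample Jacobian estimator, then bound its variance by a concentration argument on the sphere, and finally pass to the mini-batch by independence.

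\textbf{Unbiasedness.} We use the classical sphere--ball (Stokes/divergence) identity for uniform smoothing. Apply it componentwise: for each coordinate $y^{(j)}$, which is Lipschitz on $\mathcal X_{\bar\delta}$,
\[
\nabla y^{(j)}_\delta(x)=\frac{n}{\delta}\mathbb E_{w\sim{\rm Unif}(\Sph^{n-1})}\big[y^{(j)}(x+\delta w)w\big].
\]
This holds for Lipschitz functions by the divergence theorem applied to mollified approximations, or directly, since $y_\delta$ is a convolution with the normalized indicator of the ball. By symmetry of $w\mapsto -w$, the same expectation equals $\frac{n}{2\delta}\mathbb E[(y(x+\delta w)-y(x-\delta w))w^\top]$. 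Hence $\mathbb E[\apjac y_\delta(x;w)]=\jac y_\delta(x)$, and all query points lie in $\mathcal X_{\bar\delta}$ because $\delta\le\bar\delta$. Since $d_x(x)$ and $d_y(x)$ are deterministic and enter linearly, $\mathbb E[\widehat g_{\delta,B}(x)]=g_\delta(x)$.

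\textbf{Variance of a single sample.} Write $\widehat g_\delta(x;w)-g_\delta(x)=(\apjac y_\delta(x;w)-\jac y_\delta(x))^\top d_y(x)$. The second moment of a centered variable is at most its raw second moment, so it suffices to bound $\mathbb E\|\apjac y_\delta(x;w)^\top d_y(x)\|^2$. We have
\[
\apjac y_\delta(x;w)^\top d_y=\frac{n}{2\delta}\,h(w)\,w,\qquad h(w)\coloneqq\langle d_y,\,y(x+\delta w)-y(x-\delta w)\rangle,
\]
so the norm squared is $\frac{n^2}{4\delta^2}h(w)^2$ because $\|w\|=1$. The function $h$ is odd and is $2\delta M_yL_y$-Lipschitz on the sphere, so its mean is $0$. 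The naive bound $|h|\le 2\delta M_yL_y$ only gives $n^2M_y^2L_y^2$, which is too weak by a factor of $n$. Instead we invoke concentration of Lipschitz functions on $\Sph^{n-1}$:
\[
\mathbb P(|h-\mathbb E h|\ge t)\le 2\sqrt{2\pi}\,e^{-nt^2/(8L_h^2)}, \qquad L_h=2\delta M_yL_y.
\]
Integrating the tail, $\mathbb E h^2=\int_0^\infty 2t\,\mathbb P(|h|\ge t)\,dt\le 2\sqrt{2\pi}\cdot 8L_h^2/n$. Multiplying by $n^2/(4\delta^2)$ gives $\le 2\sqrt{2\pi}\cdot 8\cdot 4\delta^2M_y^2L_y^2\cdot n/(4\delta^2)=16\sqrt{2\pi}\,nM_y^2L_y^2$, matching $c_{\rm var}$. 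This is the same argument as Shamir's two-point estimator bound, applied to the scalar directional quantity $h$.

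\textbf{Batch.} The $B$ samples are independent and the per-sample errors are centered, so the cross terms vanish and the variance divides by $B$, which yields \eqref{eq:hg_estimator_variance}.

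The main obstacle is the dimension-sharp second-moment bound. It requires the sphere concentration inequality with the right constants, together with careful tracking of the Lipschitz constant of $h$ on the sphere using the Euclidean metric. Justifying the divergence identity for a merely Lipschitz $y$ is routine by comparison.
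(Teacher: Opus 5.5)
Your proof is correct and takes essentially the same approach as the paper: reduce to the scalar quantity $u\mapsto d_y(x)^\top y(u)$, get unbiasedness from the sphere--ball smoothing identity, bound the second moment with the $16\sqrt{2\pi}\,nL^2$ two-point estimate, and divide by $B$ using independence. The only difference is that you re-derive the randomized-smoothing facts the paper simply cites from Lin et al.\ (the divergence identity, and concentration on $\mathbb{S}^{n-1}$ applied directly to the odd function $h$ with Lipschitz constant $2\delta M_yL_y$), and your constant tracking checks out.
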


\begin{proof}
The proof is given in Appendix~\ref{app:proof_bizol_hypergradient_estimator}.
\end{proof}

\subsection{Bi-ZOL algorithm}
\label{subsec:bizol_algorithm}

Bi-ZOL alternates between observing the lower-level response, estimating the
smoothed response Jacobian, and performing a Frank--Wolfe update over
\(\mathcal X\). The algorithm is stated in Algorithm~\ref{alg:bizol} in a mini-batch form.

Recall the geometric interpretation in Section \ref{sec:anchored_chain_rule_model}, Bi-ZOL actually performs a moving anchored linearization: at each
iterate, it constructs a local first-order model $Q_x$ that is anchored at the
the latest observed response pair (Line 3-4), and then use a two-point estimator to approximate the response sensitivity (Line 5-7).
The estimated response sensitivity is then used to form the approximate hypergradient estimator (Line 8).
Hence the resulting direction remains tied to the current pair $(x,y(x))$ and to the bilevel
chain-rule geometry described by Proposition~\ref{prop:bl_chain_rule}.
Then Bi-ZOL performs a Frank--Wolfe update over $\mathcal X$ to find the next iterate (Line 9-10).
With the new iterate, Bi-ZOL constructs a new anchored linearization and repeats the process.

\begin{algorithm}[htb]
\caption{Bilevel Zeroth-Order Learning (Bi-ZOL)}
\label{alg:bizol}
\begin{algorithmic}[1]
\STATE \textbf{Input:} initial point \(x_0\in\mathcal X\), smoothing radius
\(\delta\in(0,\bar\delta]\), stepsizes \(\gamma\in(0,1]\),
mini-batch size \(B\ge1\).
\FOR{\(k=0,1,2,\ldots\)}
    \STATE Query the lower-level response \(y_k\leftarrow y(x_k)\).
    \STATE Compute
    \[
    d_x(x_k)\leftarrow \nabla_1\varphi(x_k,y_k),
    \,
    d_y(x_k)\leftarrow \nabla_2\varphi(x_k,y_k).
    \]
    \STATE Sample \(w_{k,b}\sim{\rm Unif}(\mathbb S^{n-1})\),
    \(b=1,\ldots,B\), independently.
    \STATE Query perturbed responses
    \[
    y_{k,b}^{+}\leftarrow y(x_k+\delta w_{k,b}),
    \,
    y_{k,b}^{-}\leftarrow y(x_k-\delta w_{k,b}).
    \]
    \STATE Form the zeroth-order Jacobian estimator
    \[
    \apjac y_{\delta,B}(x_k)
    \leftarrow
    \frac{1}{B}
    \sum_{b=1}^B
    \frac{n}{2\delta}
    \left(
    y_{k,b}^{+}-y_{k,b}^{-}
    \right)w_{k,b}^\top .
    \]
    \STATE Form the approximate hypergradient estimator
    \[
    \widehat g_k
    \leftarrow
    d_x(x_k)+\apjac y_{\delta,B}(x_k)^\top d_y(x_k) .
    \]
    \STATE Compute the Frank--Wolfe oracle
    \[
    z_k
    \in
    \arg\max_{z\in\mathcal X}
    \left\langle z-x_k,-\widehat g_k\right\rangle .
    \]
    \STATE Update
    \[
    x_{k+1}
    \leftarrow
    x_k+\gamma(z_k-x_k).
    \]
\ENDFOR
\end{algorithmic}
\end{algorithm}

Note that the response queries \(x_k\pm\delta w_{k,b}\) are used only to estimate the
local response sensitivity. They are not new upper-level iterates. The
Frank--Wolfe update keeps \(x_{k+1}\in\mathcal X\) because
\(\mathcal X\) is convex and \(\gamma\in(0,1]\).

\subsection{Convergence analysis and oracle complexity}
\label{subsec:convergence_complexity}

Recall from Section \ref{sec:bizol_stationarity_certificates} that the convergence certificate is to find a \((\delta,\epsilon)\)-Bi-ZOL stationary point.
To prove the convergence, we first define a smooth ancillary function
\begin{equation}
\label{eq:solution_ancillary_function}
\bar\varphi_\delta(x)
\coloneqq
\varphi(x,y_\delta(x)).
\end{equation}
The function $\bar\varphi_\delta(x)$ is essentially another smooth surrogate of $\tilde \varphi(x)$ by substituting the nonsmooth $y(x)$ with $y_\delta(x)$.
But we do not aim to optimize this smooth surrogate, and it is introduced strictly as an analysis device, allowing us to utilize its smoothness constant for the convergence proof. 
The algorithmic direction
is \(g_\delta(x)\), not \(\nabla\bar\varphi_\delta(x)\). The difference
between these two quantities is a bias term induced by evaluating the
upper-level partial derivatives at \(y(x)\) in \(g_\delta(x)\) and at
\(y_\delta(x)\) in \(\nabla\bar\varphi_\delta(x)\).

Define
\[
M_J
\coloneqq
\sup_{x\in\mathcal X}
\|\jac y_\delta(x)\|.
\]
Since \(y_\delta\) is \(L_y\)-Lipschitz, \(M_J\le L_y\). Also define
\begin{equation}
\label{eq:Ay_def}
A_y
\coloneqq
L_{1x}+L_{1y}L_y
+
M_J(L_{2x}+L_{2y}L_y).
\end{equation}
The following lemma gives the smoothness constant used in the descent
argument.

\begin{lemma}[Smoothness of the ancillary function]
\label{lem:solution_ancillary_smoothness}
Under Assumptions~\ref{ass:response}, \ref{ass:feasible}, and
\ref{ass:upper_regular}, the function
\(\bar\varphi_\delta\) is differentiable on \(\mathcal X\), and
\(\nabla\bar\varphi_\delta\) is Lipschitz continuous with constant
\begin{equation}
\label{eq:Lbar_delta_def}
L_{\bar\varphi,\delta}
\coloneqq
A_y
+
\frac{cM_yL_y\sqrt{n}}{\delta},
\end{equation}
\end{lemma}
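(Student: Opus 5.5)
The plan is to differentiate $\bar\varphi_\delta(x)=\varphi(x,y_\delta(x))$ by the classical chain rule and then bound the variation of each factor. Since $y$ is $L_y$-Lipschitz on $\mathcal X_{\bar\delta}$ and $\delta\le\bar\delta$, the smoothed map $y_\delta$ is differentiable, by the standard uniform-ball smoothing result, with
\[
\jac y_\delta(x)=\frac{n}{\delta}\,\mathbb E_{w\sim{\rm Unif}(\Sph^{n-1})}\left[y(x+\delta w)w^\top\right].
\]
Moreover $y_\delta$ is $L_y$-Lipschitz, so $\|\jac y_\delta(x)\|\le M_J\le L_y$. Because $\varphi\in C^1$, the chain rule gives
\[
\nabla\bar\varphi_\delta(x)=\nabla_1\varphi(x,y_\delta(x))+\jac y_\delta(x)^\top\nabla_2\varphi(x,y_\delta(x)).
\]
This settles differentiability.

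For the Lipschitz constant, fix $x,x'\in\mathcal X$ and split the difference of the second term as
\[
\jac y_\delta(x)^\top\left[\nabla_2\varphi(x,y_\delta(x))-\nabla_2\varphi(x',y_\delta(x'))\right]+\left[\jac y_\delta(x)-\jac y_\delta(x')\right]^\top\nabla_2\varphi(x',y_\delta(x')).
\]
The first term of $\nabla\bar\varphi_\delta$ is handled directly by Assumption~\ref{ass:upper_regular}. Its variation is at most $L_{1x}\|x-x'\|+L_{1y}\|y_\delta(x)-y_\delta(x')\|\le(L_{1x}+L_{1y}L_y)\|x-x'\|$. In the split above, the first piece is bounded by $M_J(L_{2x}+L_{2y}L_y)\|x-x'\|$, and the second by $M_y\|\jac y_\delta(x)-\jac y_\delta(x')\|$. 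Summing the three contributions gives $A_y\|x-x'\|$ plus $M_y$ times the Lipschitz modulus of $\jac y_\delta$.

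The main obstacle is showing that $\jac y_\delta$ is Lipschitz with constant $cL_y\sqrt n/\delta$, where the factor is $\sqrt n$ rather than $n$. I would use the dimension-free argument for Lipschitz functions smoothed over the ball. Fix a unit vector $v$ and a matrix direction, and reduce to the scalar functions $h_u(x)=\langle u,y(x)\rangle$ with $\|u\|=1$. Each $h_u$ is $L_y$-Lipschitz, and $\|\jac y_\delta(x)-\jac y_\delta(x')\|=\sup_{\|u\|=1}\|\nabla (h_u)_\delta(x)-\nabla(h_u)_\delta(x')\|$ in operator norm. For scalar $h_u$, I write $\nabla (h_u)_\delta(x)=\mathbb E_{\xi\sim{\rm Unif}(\B_n)}[\nabla h_u(x+\delta\xi)]$, which is valid since $h_u$ is differentiable a.e. with $\|\nabla h_u\|\le L_y$. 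The difference $\nabla(h_u)_\delta(x)-\nabla(h_u)_\delta(x')$ is then an integral of a bounded function against the difference of two uniform densities on shifted balls. This gives the bound
\[
\|\nabla(h_u)_\delta(x)-\nabla(h_u)_\delta(x')\|\le L_y\,\frac{\mathrm{vol}\left(B_\delta(x)\triangle B_\delta(x')\right)}{\mathrm{vol}(B_\delta)}.
\]
The symmetric-difference ratio is at most $c\sqrt n\|x-x'\|/\delta$, by the standard estimate $\mathrm{vol}(B_{n-1})/\mathrm{vol}(B_n)=O(\sqrt n)$ applied to the slab swept by translating the ball.

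One subtlety I must check is that the supremum over $u$ commutes with this bound. It does, since the estimate holds uniformly in $u$. Combining the pieces gives $\|\nabla\bar\varphi_\delta(x)-\nabla\bar\varphi_\delta(x')\|\le\left(A_y+cM_yL_y\sqrt n/\delta\right)\|x-x'\|$, which is the claimed constant $L_{\bar\varphi,\delta}$.
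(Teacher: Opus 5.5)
Your proposal is correct and follows essentially the same route as the paper: the same chain-rule formula, the same three-way split producing $A_y$, and the same scalarization of the Jacobian-difference term. The only difference is in that last term. You scalarize with an arbitrary unit vector $u$ and prove the $c\sqrt{n}L_y/\delta$ gradient-Lipschitz bound yourself via the symmetric-difference volume estimate, whereas the paper scalarizes directly with $v_\delta(x')$ (an $M_yL_y$-Lipschitz function) and invokes \eqref{eq:scalar_smoothing_gradient_lipschitz} from Lemma~\ref{lem:scalar_smoothing_estimates}.
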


\begin{proof}
Please see Appendix~\ref{app:proof_solution_ancillary_smoothness}.
\end{proof}

In the following Lemma, we compute the bias between \(\nabla\bar\varphi_\delta\) and \(g_\delta\), which appears in the convergence analysis.
\begin{lemma}[Bias between \(\nabla\bar\varphi_\delta\) and \(g_\delta\)]
\label{lem:solution_bias}
Under Assumptions~\ref{ass:response} and~\ref{ass:upper_regular}, for every
\(x\in\mathcal X\),
\begin{equation}
\label{eq:solution_bias_bound}
\left\|
\nabla\bar\varphi_\delta(x)-g_\delta(x)
\right\|
\le
C_{\rm bias}\delta,
\end{equation}
where
\begin{equation}
\label{eq:Cbias_def}
C_{\rm bias}
\coloneqq
(L_{1y}+M_JL_{2y})L_y .
\end{equation}
Equivalently, defining
\begin{equation}
\label{eq:kappa_bias_def}
\kappa_{\rm bias}
\coloneqq
DC_{\rm bias},
\end{equation}
the bias contribution in the Frank--Wolfe gap bound is at most
\(\kappa_{\rm bias}\delta\).
\end{lemma}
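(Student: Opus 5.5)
The plan is to compute $\nabla\bar\varphi_\delta(x)$ explicitly by the chain rule and subtract $g_\delta(x)$ term by term. Since $y_\delta$ is differentiable (it is a uniform-ball convolution of a Lipschitz map) and $\varphi\in C^1$, the classical chain rule gives
\[
\nabla\bar\varphi_\delta(x)
=
\nabla_1\varphi(x,y_\delta(x))
+
\jac y_\delta(x)^\top\nabla_2\varphi(x,y_\delta(x)).
\]
Comparing with \eqref{eq:approx_hypergradient}, the Jacobian factor $\jac y_\delta(x)$ is the same in both expressions. The only difference is the $y$-argument at which the partial derivatives are evaluated: $y_\delta(x)$ in the first and $y(x)$ in the second.

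The difference therefore splits into two pieces:
\[
\nabla\bar\varphi_\delta(x)-g_\delta(x)
=
\bigl[\nabla_1\varphi(x,y_\delta(x))-\nabla_1\varphi(x,y(x))\bigr]
+
\jac y_\delta(x)^\top\bigl[\nabla_2\varphi(x,y_\delta(x))-\nabla_2\varphi(x,y(x))\bigr].
\]
Applying the triangle inequality and Assumption~\ref{ass:upper_regular} with identical $x$-arguments, so that the $L_{1x}$ and $L_{2x}$ terms vanish, bounds the norm by
\[
(L_{1y}+\|\jac y_\delta(x)\|L_{2y})\,\|y_\delta(x)-y(x)\|.
\]
Using $\|\jac y_\delta(x)\|\le M_J$ for $x\in\mathcal X$ then leaves $(L_{1y}+M_JL_{2y})\|y_\delta(x)-y(x)\|$.

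The remaining step is the pointwise smoothing error $\|y_\delta(x)-y(x)\|\le L_y\delta$. Write
\[
y_\delta(x)-y(x)=\mathbb E_\xi[y(x+\delta\xi)-y(x)],
\]
and apply Jensen's inequality. Every query point $x+\delta\xi$ lies in $\mathcal X_{\bar\delta}$ because $\delta\le\bar\delta$ and $\|\xi\|\le 1$. By the $L_y$-Lipschitz property on $\mathcal X_{\bar\delta}$ from Assumption~\ref{ass:response}, this gives
\[
\|y_\delta(x)-y(x)\|\le L_y\delta\,\mathbb E\|\xi\|\le L_y\delta.
\]
Combining the two bounds yields $C_{\rm bias}\delta$.

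For the Frank--Wolfe restatement, for any $z\in\mathcal X$ we have $\langle z-x,\,g_\delta(x)-\nabla\bar\varphi_\delta(x)\rangle\le D\,C_{\rm bias}\delta$ by Cauchy--Schwarz, since $\|z-x\|\le D$. This gives the contribution $\kappa_{\rm bias}\delta$.

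The only mildly delicate point is justifying the chain rule for $\bar\varphi_\delta$, which needs differentiability of $y_\delta$ on a neighborhood of $\mathcal X$. This follows from standard ball-smoothing facts for Lipschitz maps, and I would cite or reuse the same argument used for Lemma~\ref{lem:solution_ancillary_smoothness}. The rest is routine.
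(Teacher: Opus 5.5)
Your proposal is correct and follows essentially the same argument as the paper's proof. You use the same chain-rule expansion of $\nabla\bar\varphi_\delta$, split the difference using the common factor $\jac y_\delta(x)$, and bound it through $L_{1y}$, $L_{2y}$ and $M_J$. The estimate $\|y_\delta(x)-y(x)\|\le L_y\delta$ comes from Jensen's inequality plus Lipschitz continuity, as in the paper, and Cauchy--Schwarz with $\|z-x\|\le D$ then gives the $\kappa_{\rm bias}\delta$ Frank--Wolfe contribution.
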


\begin{proof}
The complete proof is given in Appendix~\ref{app:proof_solution_bias}.
\end{proof}

\begin{theorem}[Bi-ZOL convergence]
\label{thm:solution_bizol_convergence}
Suppose Assumptions~\ref{ass:response}, \ref{ass:feasible}, and
\ref{ass:upper_regular} hold. 
Fix a smoothing radius \(0<\delta\le\bar\delta\), a batch size
\(B\in\mathbb N\), a horizon \(T\in\mathbb N\), and a constant stepsize
\(\gamma\in(0,1]\). Let \(\{x_k\}_{k=0}^{T}\) be generated by
Algorithm~\ref{alg:bizol} with these parameters.
Then
\begin{equation}
\label{eq:solution_main_convergence_bound}
\frac{1}{T}
\sum_{k=0}^{T-1}
\mathbb E
\left[
\mathcal G_\delta(x_k)
\right]
\le
\frac{\Delta_\delta}{\gamma T}
+
\frac{L_{\bar\varphi,\delta}D^2}{2}\gamma
+
\kappa_{\rm bias}\delta
+
\frac{D\sigma_\delta}{\sqrt B},
\end{equation}
where
\[
\Delta_\delta
\coloneqq
\bar\varphi_\delta(x_0)
-
\min_{x\in\mathcal X}
\bar\varphi_\delta(x),
\]
\(L_{\bar\varphi,\delta}\) is defined in
\eqref{eq:Lbar_delta_def}, \(\kappa_{\rm bias}\) is defined in
\eqref{eq:kappa_bias_def}, and \(\sigma_\delta\) is defined in
\eqref{eq:sigma_delta_def}.
\end{theorem}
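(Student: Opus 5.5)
The plan is the standard stochastic Frank--Wolfe descent argument, applied to the smooth ancillary function $\bar\varphi_\delta$ from \eqref{eq:solution_ancillary_function}. Its gradient is $L_{\bar\varphi,\delta}$-Lipschitz by Lemma~\ref{lem:solution_ancillary_smoothness}. Since $x_{k+1}-x_k=\gamma(z_k-x_k)$ and $\|z_k-x_k\|\le D$, the descent lemma gives
\[
\bar\varphi_\delta(x_{k+1})\le \bar\varphi_\delta(x_k)+\gamma\langle \nabla\bar\varphi_\delta(x_k),z_k-x_k\rangle+\frac{L_{\bar\varphi,\delta}D^2}{2}\gamma^2 .
\]

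Next, split the inner product as
\[
\langle \nabla\bar\varphi_\delta(x_k),z_k-x_k\rangle=\langle \widehat g_k,z_k-x_k\rangle+\langle g_\delta(x_k)-\widehat g_k,z_k-x_k\rangle+\langle \nabla\bar\varphi_\delta(x_k)-g_\delta(x_k),z_k-x_k\rangle .
\]
The third term is at most $D C_{\rm bias}\delta=\kappa_{\rm bias}\delta$ by Lemma~\ref{lem:solution_bias}.

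For the first term, let $v_k\in\arg\max_{z\in\mathcal X}\langle z-x_k,-g_\delta(x_k)\rangle$ be the exact Frank--Wolfe vertex, which is measurable with respect to $x_k$. Optimality of $z_k$ for $\widehat g_k$ gives $\langle \widehat g_k,z_k-x_k\rangle\le\langle \widehat g_k,v_k-x_k\rangle$. This equals $-\mathcal G_\delta(x_k)+\langle \widehat g_k-g_\delta(x_k),v_k-x_k\rangle$.

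Combining with the second term leaves the noise term $\langle g_\delta(x_k)-\widehat g_k,z_k-v_k\rangle$. This is the delicate step, because $z_k$ depends on the fresh samples, so the term is not mean zero. I will bound it crudely by $D\|\widehat g_k-g_\delta(x_k)\|$, using that both points lie in $\mathcal X$. Taking conditional expectation and applying Jensen together with Lemma~\ref{lem:solution_hg_estimator} bounds it by $D\sigma_\delta/\sqrt B$; unbiasedness is not even needed.

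Collecting terms and taking total expectation gives
\[
\gamma\,\mathbb E[\mathcal G_\delta(x_k)]\le \mathbb E[\bar\varphi_\delta(x_k)-\bar\varphi_\delta(x_{k+1})]+\gamma\kappa_{\rm bias}\delta+\gamma\frac{D\sigma_\delta}{\sqrt B}+\frac{L_{\bar\varphi,\delta}D^2}{2}\gamma^2 .
\]
Summing over $k=0,\dots,T-1$ telescopes the function values. The sum is bounded by $\bar\varphi_\delta(x_0)-\min_{\mathcal X}\bar\varphi_\delta=\Delta_\delta$, where the minimum exists because $\bar\varphi_\delta$ is continuous on the compact set $\mathcal X$ and all iterates stay in $\mathcal X$. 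Dividing by $\gamma T$ yields \eqref{eq:solution_main_convergence_bound}.

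The main obstacle is the correlated noise term $\langle g_\delta-\widehat g_k, z_k-v_k\rangle$. Introducing $v_k$ and bounding by $D\|\cdot\|$ handles it, at the price of the non-vanishing $D\sigma_\delta/\sqrt B$ floor. The remaining steps only need the cited lemmas to hold at every iterate, which is the case because all iterates stay in $\mathcal X$ and the queries stay in $\mathcal X_{\bar\delta}$.
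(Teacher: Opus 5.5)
Your proposal is correct and follows the paper's proof essentially step for step. Both use the descent lemma on $\bar\varphi_\delta$, bound the bias term by $\kappa_{\rm bias}\delta$ via Lemma~\ref{lem:solution_bias}, and compare with the exact Frank--Wolfe vertex (the paper's $z_k^\delta$, your $v_k$). This isolates $\langle g_\delta(x_k)-\widehat g_k, z_k-z_k^\delta\rangle\le D\|g_\delta(x_k)-\widehat g_k\|$, after which Jensen's inequality with Lemma~\ref{lem:solution_hg_estimator} and telescoping finish the argument. The only difference is that you split the inner product into three terms at once, which gives the same inequality.
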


\begin{proof}
[Proof sketch] The proof uses \(\bar\varphi_\delta\) as a smooth descent function. By Lemma~\ref{lem:solution_ancillary_smoothness}, one step of the Frank--Wolfe update gives a descent inequality with curvature term \(L_{\bar\varphi,\delta}D^2\gamma^2/2\). Since the update direction is computed from the stochastic estimator \(\widehat g_{\delta,B}(x_k)\) rather than from \(\nabla\bar\varphi_\delta(x_k)\), the descent bound contains two additional errors: the bias \(\|\nabla\bar\varphi_\delta(x_k)-g_\delta(x_k)\|\), controlled by Lemma~\ref{lem:solution_bias}, and the estimation error \(\|\widehat g_{\delta,B}(x_k)-g_\delta(x_k)\|\), controlled by Lemma~\ref{lem:solution_hg_estimator}. Summing the resulting inequality over \(k=0,\ldots,T-1\) gives \eqref{eq:solution_main_convergence_bound}. The complete proof is given in Appendix~\ref{app:proof_solution_bizol_convergence}. \end{proof}

\begin{remark}
    \label{rem:solution_bizol_convergence}
    In Theorem~\ref{thm:solution_bizol_convergence}, we use the average of the expected Bi-ZOL Frank--Wolfe gap as the convergence measure. The interpretation is that it equals $\mathbb{E}[\mathcal{G}_\delta(x_R)]$, where $x_R$ is selected uniformly at random from the candidate solutions $\{x_k\}_{k=0}^{T-1}$. Furthermore, the right-hand side of \eqref{eq:solution_main_convergence_bound} is also the upper bound on $\mathbb{E}[\min_{k=0,\dots,T-1} \mathcal{G}_\delta(x_k)]$, i.e., the expected value of the minimum Bi-ZOL Frank--Wolfe gap. 
\end{remark}
The bounds above are stated for the Bi-ZOL certificate
\(\mathcal G_\delta\). The induced Clarke-type certificate follows from
Proposition~\ref{prop:stationarity_transfer} and contains the additional
structural smoothing term controlled by
\(\omega_y^c(x,\delta)\).

We now translate Theorem~\ref{thm:solution_bizol_convergence} into a
response-oracle complexity bound. 
\begin{corollary}[Response-oracle complexity of Bi-ZOL]
\label{cor:bizol_response_oracle_complexity}
Suppose the conditions of Theorem~\ref{thm:solution_bizol_convergence} hold. Let $\epsilon > 0$ be a target Bi-ZOL Frank--Wolfe accuracy, and let $x_R$ be sampled uniformly at random from $\{x_0,\ldots,x_{T-1}\}$.
By choosing the smoothing radius 
\(
\delta
=
\Theta\left(
\frac{\epsilon}{\kappa_{\rm bias}}
\right),
\) 
the batch size 
\(
B
=
\Theta\left(
\frac{D^2\sigma_\delta^2}{\epsilon^2}
\right)
=
\Theta\left(
\frac{D^2nM_y^2L_y^2}{\epsilon^2}
\right),
\)
and the stepsize 
\(
\gamma
=
\min\left\{
1,\,
\sqrt{
\frac{2\Delta_\delta}
{L_{\bar\varphi,\delta}D^2T}
}
\right\},
\)
the algorithm achieves $\mathbb{E}[\mathcal{G}_\delta(x_R)] \le \mathcal{O}(\epsilon)$ in 
\[
T
=
\Theta\left(
\Delta_\delta D^2A_y\,\epsilon^{-2}
+
\Delta_\delta D^2M_yL_y\sqrt n\,\kappa_{\rm bias}\,\epsilon^{-3}
\right)
\] iterations. 

Consequently, the leading response-oracle complexity is bounded by
\begin{equation}
\label{eq:cor_leading_response_oracle_complexity_bizol}
N_y^{\rm BiZOL} = \mathcal{O}\left( n^{3/2} M_y^3 L_y^3 \kappa_{\rm bias} \epsilon^{-5} \right),
\end{equation}
where constants independent of $n$, $m$, and $\epsilon$ are suppressed.
\end{corollary}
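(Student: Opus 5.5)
The plan is to plug the parameter choices into the right-hand side of \eqref{eq:solution_main_convergence_bound} from Theorem~\ref{thm:solution_bizol_convergence} and make each of the four terms $O(\epsilon)$. By Remark~\ref{rem:solution_bizol_convergence}, the averaged expected gap equals $\mathbb E[\mathcal G_\delta(x_R)]$, so bounding the right-hand side suffices.

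The two parameter-independent terms are handled first. With $\delta=\Theta(\epsilon/\kappa_{\rm bias})$ the bias term satisfies $\kappa_{\rm bias}\delta=O(\epsilon)$. Note that this requires $\delta\le\bar\delta$, which holds for $\epsilon$ small enough; otherwise take $\delta=\min\{\bar\delta,\Theta(\epsilon/\kappa_{\rm bias})\}$. With $B=\Theta(D^2\sigma_\delta^2/\epsilon^2)$ the variance term satisfies $D\sigma_\delta/\sqrt B=O(\epsilon)$. Substituting $\sigma_\delta^2=c_{\rm var}nM_y^2L_y^2$ from Lemma~\ref{lem:solution_hg_estimator} gives the stated form of $B$. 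Since $\sigma_\delta$ does not depend on $\delta$, there is no coupling between the choices of $\delta$ and $B$.

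Next come the stepsize terms. If $\gamma=\sqrt{2\Delta_\delta/(L_{\bar\varphi,\delta}D^2T)}\le1$, the first two terms balance to $\sqrt{2\Delta_\delta L_{\bar\varphi,\delta}D^2/T}$. This is at most $\epsilon$ once $T\ge 2\Delta_\delta L_{\bar\varphi,\delta}D^2\epsilon^{-2}$. If instead $\gamma=1$, the minimum was attained at $1$, meaning $L_{\bar\varphi,\delta}D^2T\le 2\Delta_\delta$. Then the first term gives $\Delta_\delta/T\le \Delta_\delta\epsilon^2/(2\Delta_\delta L_{\bar\varphi,\delta}D^2)$, and the second term $L_{\bar\varphi,\delta}D^2/2\le\Delta_\delta/T$. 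Both are $O(\epsilon)$ for the chosen $T$; alternatively, $T$ large enough forces the first case. Inserting $L_{\bar\varphi,\delta}=A_y+cM_yL_y\sqrt n/\delta$ from Lemma~\ref{lem:solution_ancillary_smoothness} with $\delta=\Theta(\epsilon/\kappa_{\rm bias})$ gives
\[
T=\Theta\left(\Delta_\delta D^2A_y\epsilon^{-2}+\Delta_\delta D^2M_yL_y\sqrt n\,\kappa_{\rm bias}\epsilon^{-3}\right).
\]
Adding the four terms yields $\mathbb E[\mathcal G_\delta(x_R)]\le O(\epsilon)$.

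The oracle count follows. Each iteration uses $2B+1$ response queries (lines 3 and 6 of Algorithm~\ref{alg:bizol}), so $N_y=(2B+1)T$. For small $\epsilon$ the leading term is $B$ times the $\epsilon^{-3}$ part of $T$:
\[
\frac{D^2nM_y^2L_y^2}{\epsilon^2}\cdot\frac{\Delta_\delta D^2M_yL_y\sqrt n\,\kappa_{\rm bias}}{\epsilon^3}=O\left(n^{3/2}M_y^3L_y^3\kappa_{\rm bias}\epsilon^{-5}\right),
\]
after suppressing $D$ and $\Delta_\delta$. The main obstacle is that $\Delta_\delta$ depends on $\delta$, and so on $\epsilon$, which would break the suppression. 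To handle it, I would bound $\Delta_\delta$ uniformly in $\delta\le\bar\delta$ using the continuity of $\varphi$ on the compact set $\mathcal X\times\{\|y\|\le Y_{\max}\}$. This works because $\|y_\delta\|\le Y_{\max}$ by Assumption~\ref{ass:response}, which gives $\Delta_\delta\le 2\sup|\varphi|$ over that set, independent of $\epsilon$, $n$ and $m$.
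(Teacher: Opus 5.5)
Your proposal is correct and follows the paper's own argument: make each of the four terms in the bound of Theorem~\ref{thm:solution_bizol_convergence} $O(\epsilon)$ via the choices of $\delta$ and $B$, balance the two optimization terms through $\gamma$, substitute $L_{\bar\varphi,\delta}=A_y+cM_yL_y\sqrt n/\delta$ to read off $T$, and multiply by the $2B+1$ queries per iteration; your checks on $\delta\le\bar\delta$ and the $\delta$-uniform bound on $\Delta_\delta$ are sensible additions that the paper omits. One small slip: in your $\gamma=1$ branch the estimate $\Delta_\delta/T\le\epsilon^2/(2L_{\bar\varphi,\delta}D^2)$ does not give $O(\epsilon)$, because in that branch the chosen $T$ forces $L_{\bar\varphi,\delta}D^2\le\epsilon$. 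Rely on your fallback instead: since $L_{\bar\varphi,\delta}D^2=\Omega(1/\epsilon)$, for small $\epsilon$ the chosen $T$ makes the square-root stepsize at most $1$, so that branch never occurs. The paper's proof glosses over this case as well.
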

\begin{proof}
By Theorem~\ref{thm:solution_bizol_convergence} and the uniform random
choice of \(R\),
\[
\mathbb E[\mathcal G_\delta(x_R)]
\le
\frac{\Delta_\delta}{\gamma T}
+
\frac{L_{\bar\varphi,\delta}D^2}{2}\gamma
+
\kappa_{\rm bias}\delta
+
\frac{D\sigma_\delta}{\sqrt B}.
\]
The choice of $\delta$ makes
\(\kappa_{\rm bias}\delta=O(\epsilon)\). 
The
choice of $B$ makes
\(D\sigma_\delta/\sqrt B=O(\epsilon)\).
With the choice of $\gamma$, the two
optimization terms are bounded by
\(
O\left(
D\sqrt{\frac{\Delta_\delta L_{\bar\varphi,\delta}}{T}}
\right).
\)
Since
\(
L_{\bar\varphi,\delta}
=
A_y
+
\frac{cM_yL_y\sqrt n}{\delta},
\)
the choice of $\delta$ gives
\(
L_{\bar\varphi,\delta}
=
A_y
+
O\left(
\frac{M_yL_y\sqrt n\,\kappa_{\rm bias}}{\epsilon}
\right).
\)
Therefore, the stated choice of \(T\) makes
\(
D\sqrt{\frac{\Delta_\delta L_{\bar\varphi,\delta}}{T}}
=
O(\epsilon).
\)
Combining this with the choices of \(\delta\) and \(B\) gives
\(
\mathbb E[\mathcal G_\delta(x_R)]\le O(\epsilon).
\)

    Since each iteration uses \(2B\) perturbed response queries and one response
query at the current point \(x_k\), the total number of response-oracle calls
satisfies
\(
N_y^{\rm BiZOL}
=O(TB)
=
O\left(
\Delta_\delta D^4 nM_y^2L_y^2A_y\,\epsilon^{-4}
+
\Delta_\delta D^4 n^{3/2}M_y^3L_y^3
\kappa_{\rm bias}\,\epsilon^{-5}
\right).
\)
In particular, for small \(\epsilon\), the smoothing-induced term gives the
leading response-oracle complexity.
\end{proof}

If
\(
M_y=O(1),
\,
L_y=O(1),
\,
\kappa_{\rm bias}=O(1),
\)
then the leading response-oracle complexity has no explicit dependence on
the response dimension \(m\), and
\[
N_y^{\rm BiZOL}
=
O\left(
n^{3/2}\epsilon^{-5}
\right).
\]
If the response coupling scales as
\(
L_y=\Theta(\rho_y),
\,
M_J=\Theta(\rho_y),
\)
then
\(
\kappa_{\rm bias}
=
D(L_{1y}+M_JL_{2y})L_y
=
\Theta(\rho_y^2)
\)
when \(D,L_{1y},L_{2y}=O(1)\). With \(M_y=O(1)\), the leading response-oracle complexity becomes
\[
N_y^{\rm BiZOL}
=
O\left(
n^{3/2}\rho_y^5\epsilon^{-5}
\right).
\]
In particular, if \(\rho_y=\sqrt m\), then
\[
N_y^{\rm BiZOL}
=
O\left(
n^{3/2}m^{5/2}\epsilon^{-5}
\right).
\]
These dimension scalings arise when the scalarized response sensitivity
\(M_yL_y\) or the bias scale \(\kappa_{\rm bias}\) grows with the response
dimension \(m\).

\begin{remark}[Response-size dependence of the response sensitivity]
\label{rem:response_size_dependence_of_response_sensitivity}
The constant \(L_y\) should not always be interpreted as a
dimension-free numerical constant. In a lower-level setting where each element in the response vector $y$ is an individual response,
\(y(x)=\mathrm{col}(y_1(x),\ldots,y_m(x))\), the Lipschitz constant of the stacked response may scale with the size of $m$.
For example, if each local response satisfies
\[
\|y_i(x)-y_i(x')\|\le \ell \|x-x'\|,
\qquad i=1,\ldots,m,
\]
then
\[
\|y(x)-y(x')\|
\le
\sqrt{m}\ell\|x-x'\|.
\]

This dependence can be mitigated when the response has normalized
average-interaction or mean-field scaling, or when the upper-level
objective is an average objective. For instance, if
\[
\varphi(x,y)=\frac1m\sum_{i=1}^m \varphi_i(x,y_i)
\]
and the local gradients \(\nabla_{y_i}\varphi_i\) are uniformly bounded,
then \(M_y=\|\nabla_2\varphi(x,y)\|=O(m^{-1/2})\). In this case, the
product \(M_yL_y\) may remain \(O(1)\) even when \(L_y=O(\sqrt m)\).
Therefore, the relevant scalability factor in Bi-ZOL is not only
\(L_y\), but the combined quantities \(M_yL_y\), and \(\kappa_{\rm bias}\).
\end{remark}

\section{Simulation Results}
\label{sec:simulation_results}
In this section, we numerically investigate the performance of Algorithm~\ref{alg:bizol} by deploying it on a general incentive-based tracking problem with capacity-limited responses. 
First, we test the convergence of Bi-ZOL by showing the Bi-ZOL Frank--Wolfe gap decreases along the generated trajectory.
Second, we show the decrease of the hyperobjective value with different batch sizes and compare the results with those of VZO.
Third, we illustrate the scalability of the solution algorithm by showing tests on different problem dimensions.

\subsection{Simulation setup}
We consider a bilevel tracking problem in which a system operator chooses an upper-level control signal $x\in\mathcal X$, and a lower-level subsystem produces a constrained response $y(x)$. 
The upper-level signal can be interpreted as an incentive, and the lower-level response is obtained from a constrained quadratic discomfort minimization with a linear incentive induced by $x$.

\subsubsection{Upper level}
The variable $x$ represents an upper-level signal and it is limited by $x\in\mathcal{X}$. 
The upper-level objective penalizes control effort, tracking error of the induced response, and control-response coupling (payment from upper level to lower level).
The objective function is expressed as $\varphi \left( x,y \right) =\frac{\mu _x}{2}\left\| x-x_{\rm ref} \right\| _{2}^{2}+\frac{\kappa}{2}\left\| Hy-y_{\rm ref} \right\| _{2}^{2}+\beta x^{\top}Cy$, where $x_{\rm ref}$ is a nominal upper-level signal, $y_{\rm ref}$ is the desired response reference, $H$ maps the response to the tracked output, and $C$ describes the coupling between the upper-level signal and the lower-level response.
The feasible set of the upper-level variable is a box $\mathcal X=[-R_x,R_x]^n$.

\subsubsection{Lower level}
The lower-level model represents a constrained response of flexible agents. 
Each agent has a quadratic discomfort cost for deviating from its baseline response $d$ and receives a linear incentive induced by the upper signal.
Thus the lower level is to minimize $g(x,y) =\frac{1}{2}\left\| y-d \right\| _{2}^{2}-\left( Ax \right) ^{\top}y$, subject to the capacity limit $y\in\mathcal{Y}$.
Given $x$, completing the square gives the equivalent projection response map, $y(x) =\Pi _{\mathcal{Y}}( d+Ax )$.
Therefore, the nonsmoothness of the response map is generated by the physical capacity set $\mathcal Y$.

We consider two response sets, both of which yield nonsmooth response maps: 
\begin{itemize}
    \item A heterogeneous box constraint $\mathcal{Y} _{box}=\prod_{i=1}^m{\left[ -\bar{y}_i,\bar{y}_i \right]}$, which represents individual capacity limits and gives a PWA response map;
    \item An Euclidean ball constraint $\mathcal{Y} _{ball}=\left\{ y\in \mathbb{R} ^m:\left\| y \right\| _2\le r \right\}$, which represents an aggregate response budget.
    The corresponding response map has a curved active boundary and is not PWA.
\end{itemize}

In the following simulations, we mainly compare Bi-ZOL with VZO.
VZO applies a zeroth-order estimator to the whole hyperobjective $\tilde \varphi (x)$.
In contrast, Bi-ZOL uses the exact upper-level derivatives at the current response and applies zeroth-order sampling only to the response-sensitivity term.
For each comparison, the two methods use the same smoothing radius $\delta$, the same constant Frank--Wolfe stepsize $\gamma$, and the same batch size $B$ in the two-point estimator.

Each test curve is obtained from 10 independent runs, which use different random samplings. 
For a fair comparison, the same problem instance and the same initial point are used for Bi-ZOL and VZO within each run, while the random zeroth-order directions are regenerated according to the corresponding sampling seed. 
The solid curves report the empirical mean, and the shaded regions report the standard deviation across runs.

\subsection{Convergence behavior}
We first consider the box-constrained lower-level problem, whose response map is PWA.
In practice, this setting corresponds to individual capacity limits of the lower-level agents.
In this experiment, we use dimensions $n=20$ and $m=200$.
The algorithm stepsize and smoothing radius is set as $\gamma=0.1$ and $\delta=0.08$.
To monitor whether the trajectory interacts with nonsmooth regions, we also record active-set related diagnostics, such as the fraction of saturated response and the frequency with which the sampled perturbations cross active-set boundaries. 
These diagnostics confirm that the response constraints are active along the trajectory and that the experiment is not reduced to a smooth unconstrained response case.

We illustrate the convergence behavior of Bi-ZOL in Fig.~\ref{fig:PWA_stationarity} using best-so-far validation Bi-ZOL Frank--Wolfe gap.
At each $x_k$, we compute the validation hypergradient estimator, which is 
\(
\widehat g_{\delta}^{\operatorname{val}}(x_k) = d_x(x_k) + \frac{1}{B_{\operatorname{val}}} \sum_{j=1}^{B_{\operatorname{val}}} \frac{n}{2\delta} \left\langle d_y(x_k), y(x_k+\delta w_j)-y(x_k-\delta w_j) \right\rangle w_j,
\)
to approximate the exact value of $g_\delta(x_k)$. 
We set $B_{\operatorname{val}}=200$.
The corresponding validation Frank--Wolfe gap is $\widehat{\mathcal G}_{\delta}^{\operatorname{val}}(x_k) = \max_{s\in\mathcal X} \left\langle x_k-s, \widehat g_{\delta}^{\operatorname{val}}(x_k) \right\rangle$.
To reduce the effect of randomness, we plot the best-so-far Frank--Wolfe gap $\widehat{\mathcal G}_{\delta,\operatorname{best}}^{\operatorname{val}}(x_k) = \min_{0\le t\le k} \widehat{\mathcal G}_{\delta}^{\operatorname{val}}(x_t)$ to depict the convergence behavior of $\mathcal{G}_\delta(x)$.
To validate our stationarity transfer in Proposition~\ref{prop:stationarity_transfer}, we also compute the Clarke Frank--Wolfe gap along the same trajectories and plot its best-so-far trajectory $\mathcal G^c_{\operatorname{best}}(x_k)
=
\min_{0\le t\le k}\mathcal G^c(x_t)$.

\begin{figure}[htbp]
    \centering
    \includegraphics[width=\columnwidth]{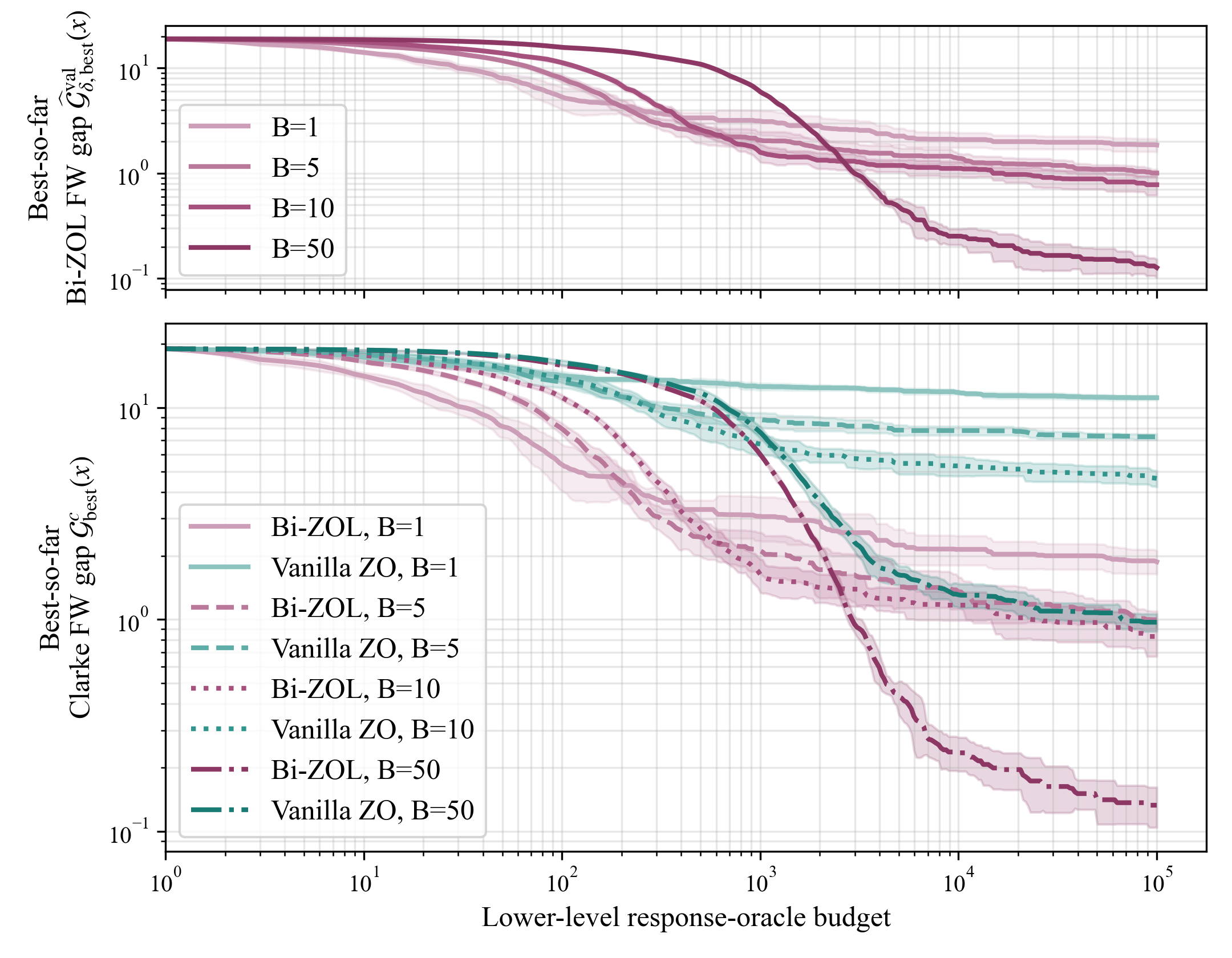}
    \caption{Stationarity gap curves on the PWA response experiment under different response-oracle batch sizes $B$.
    The upper panel reports the Bi-ZOL best-so-far validation Frank--Wolfe gap
    $\widehat{\mathcal G}_{\delta,\operatorname{best}}^{\operatorname{val}}(x_k)$,
    computed with $B_{\operatorname{val}}=200$ validation directions.
    The lower panel reports the exact best-so-far Clarke Frank--Wolfe gap
    $\mathcal G^c_{\operatorname{best}}(x_k)$ for both Bi-ZOL and VZO.
    The horizontal axis is the cumulative lower-level response-oracle budget.}
    \label{fig:PWA_stationarity}
\end{figure}

Figure~\ref{fig:PWA_stationarity} shows that the stationarity certificate decreases as the response-oracle budget increases, which is consistent with the convergence analysis and indicates that the iterates approach a $(\delta,\epsilon)$-Bi-ZOL Frank--Wolfe stationary point.
For a batch size $B$, it is worth mentioning that each VZO iteration uses $2B$ response queries, while each Bi-ZOL iteration uses $2B+1$ response queries because it also evaluates $y(x_k)$ at the current point. 
Hence, under the same response-oracle budget, Bi-ZOL may perform fewer iterations than VZO.
The Clarke gap curves provide an additional nonsmooth stationarity check using the exact PWA geometry.
Bi-ZOL also significantly reduces the exact Clarke Frank--Wolfe gap along the trajectory, while VZO remains at a larger Clarke gap under the same budget.
This indicates that the structure-based hypergradient estimation better aligns the true Clarke stationarity geometry of the nonsmooth bilevel problem.
As $B$ increases, all Frank--Wolfe gaps improve further because the variance term caused by the two-point estimator is reduced, leading to more accurate descent directions.

\subsection{Hyperobjective value}
We further compare the true hyperobjective value trajectories of Bi-ZOL and VZO to evaluate the solution quality of the proposed zeroth-order method.

\begin{figure}[htbp]
    \centering
    \includegraphics[width=\columnwidth]{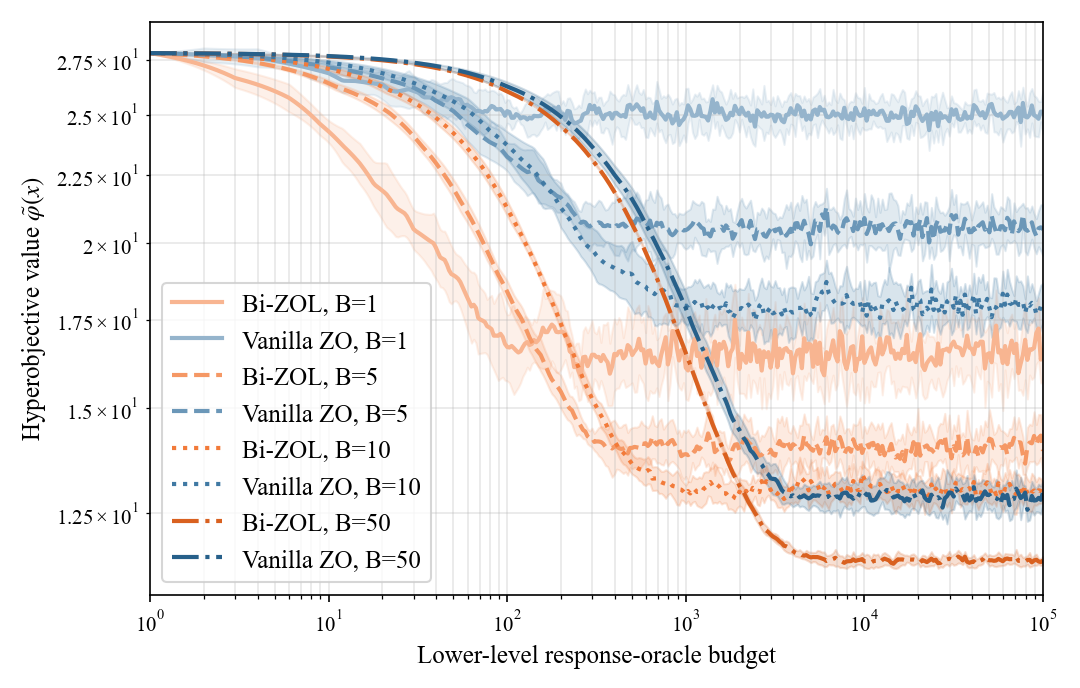}
    \caption{Hyperobjective value trajectories for the PWA response setting with $n=20$ and $m=200$. 
   Across all batch sizes, Bi-ZOL achieves a lower objective value with general reduced variance compared to VZO under the same response-oracle budget.}
    \label{fig:PWA_obj}
\end{figure}

\begin{table}[htb]
    \centering
    \caption{Comparison of objective values of two methods (PWA response map, $n=20$, $m=200$).}
    \label{tab:PWA_results}
    \resizebox{\columnwidth}{!}{%
    \begin{tabular}{l c c c}
        \toprule
        \textbf{Batch size} & \textbf{Bi-ZOL final $\tilde{\varphi}(x)$}\footnotemark & \textbf{VZO final $\tilde{\varphi}(x)$} & \textbf{Relative improvement $\Delta_{\rm obj}$} \footnotemark \\
        \midrule
        B=1  & $16.59 \pm 0.40$ & $24.95 \pm 0.27$ & $33.48\% \pm 1.88\%$ \\
        B=5  & $14.17 \pm 0.60$ & $20.60 \pm 0.80$ & $31.10\% \pm 4.71\%$ \\
        B=10 & $12.99 \pm 0.21$ & $18.04 \pm 0.60$ & $27.93\% \pm 2.94\%$ \\
        B=50 & $11.54 \pm 0.09$ & $12.91 \pm 0.40$ & $10.49\% \pm 2.60\%$ \\
        \bottomrule
    \end{tabular}}
\end{table}

\addtocounter{footnote}{-1}
\footnotetext{To counter the randomness, we compute the mean of last 10 iterations' hyperobjective value as the final hyperobjective value.}

\stepcounter{footnote}
\footnotetext{The relative improvement $\Delta_{\rm obj}$ is defined as
\(
\Delta _{\rm obj}=\frac{\tilde{\varphi}(x)^{\rm final} _{\rm VZO}-\tilde{\varphi}(x)^{\rm final} _{\rm Bi-\rm ZOL}}{| \tilde \varphi(x)^{\rm final} _{\rm VZO}|}.
\)}

Fig~\ref{fig:PWA_obj} reports the hyperobjective value under the same response-oracle budget. 
Across all batch sizes $B=1,5,10, 50$, Bi-ZOL reaches a lower objective value than VZO under the same budget. 
This indicates that the Bi-ZOL direction is more effective in objective minimization in this PWA response setting.
The improvement comes from two factors. 
First, Bi-ZOL uses the exact derivatives of the known upper-level objective at the current pair $(x,y(x))$, so that the target approximate hypergradient $g_\delta(x)$ indicates an efficient update direction.
In contrast, VZO estimates the gradient of a fully smoothed hyperobjective, which is $\nabla \tilde \varphi_\delta(x)$, so its direction may be affected by full-smoothing bias.
Second, Bi-ZOL's zeroth-order sampling is restricted to the response-sensitivity term, rather than the full reduced objective. 
Thus, during the approximation of the surrogate hypergradient, the two-point estimator of Bi-ZOL has less noise.

With the $B$ increase, both methods attain lower objective values, which results from the lower noise of two-point estimators.
Moreover, the gap between Bi-ZOL and VZO decreases as $B$ increases. 
This is expected because larger batches significantly reduce the variance of VZO estimators.
Hence, VZO benefits more from increasing $B$, while the advantage of Bi-ZOL is most visible in the low-to-moderate batch regime, where response-oracle queries are limited. 
Table~\ref{tab:PWA_results} summarizes the fixed-budget performance for different batch sizes, including the final objective value and the relative performance improvement of Bi-ZOL with respect to VZO.
From Table~\ref{tab:PWA_results}, we can also conclude that except for $B=1$, Bi-ZOL is generally more stable across random runs, as indicated by the smaller objective standard deviation.

\begin{figure}[htbp]
    \centering
    \includegraphics[width=\columnwidth]{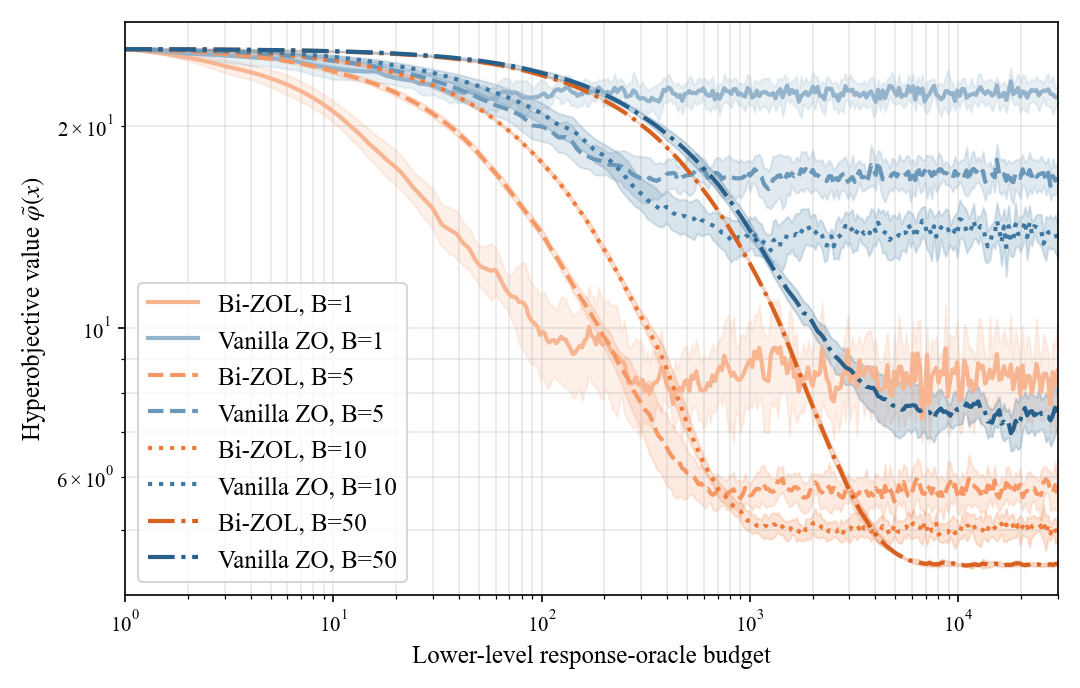}
    \caption{Hyperobjective value trajectories for the non-PWA response setting with $n=20$ and $m=200$.  
    The results demonstrate that Bi-ZOL maintains a lower final objective even when the lower-level response map has a curved active boundary.}
    \label{fig:nonPWA_obj}
\end{figure}

\begin{table}[htbp]
    \centering
    \caption{Comparison of objective values of two methods (non-PWA response map, $n=20$, $m=200$).}
    \label{tab:nonPWA_results}
    \resizebox{\columnwidth}{!}{%
    \begin{tabular}{l c c c c}
        \toprule
        \textbf{\makecell{Batch \\size}} & 
        \textbf{\makecell{Bi-ZOL final \\ $\tilde{\varphi}(x)$}} & 
        \textbf{\makecell{VZO final \\ $\tilde{\varphi}(x)$}} & 
        \textbf{\makecell{Relative improvement \\ $\Delta_{\rm obj}$}} & 
        \textbf{\makecell{Bi-ZOL  \\ $\widehat{\mathcal{G}}^{\mathrm{val}}_{\delta,\mathrm{best}}(x)$}} \\
        \midrule
        B=1  & $8.84 \pm 1.15$ & $22.13 \pm 0.91$ & $59.72\% \pm 5.67\%$ & $1.42 \pm 0.10$ \\
        B=5  & $5.77 \pm 0.24$ & $16.68 \pm 0.75$ & $65.77\% \pm 1.78\%$ & $0.51 \pm 0.09$ \\
        B=10 & $4.96 \pm 0.09$ & $13.43 \pm 0.47$ & $63.23\% \pm 1.43\%$ & $0.19 \pm 0.05$ \\
        B=50 & $4.45 \pm 0.02$ & $7.43 \pm 0.41$ & $40.97\% \pm 3.94\%$ & $0.0034 \pm 0.0016$ \\
        \bottomrule
    \end{tabular}}
\end{table}

We next replace the individual box constraints by the Euclidean ball constraint.
This models an aggregate response budget shared by all lower-level response components.
Unlike the box-constrained case, this response map is not PWA. Its active boundary is curved, which allows us to test whether Bi-ZOL remains effective beyond the PWA setting.
Fig.~\ref{fig:nonPWA_obj} reports the hyperobjective trajectory and Table~\ref{tab:nonPWA_results} gives statistical results.
The results show that Bi-ZOL continues to decrease the objective and its stationarity certificate under the non-PWA response map, and the relative improvement is even larger than in the PWA case.
These results suggest that the benefit of Bi-ZOL is not restricted to PWA response maps.

\subsection{Sensitivity to problem dimensions}

We next examine the scalability of Bi-ZOL and VZO with different problem dimensions.
We use a complete grid
\[
n\in\{20,40,80,100\},
\qquad
m\in\{40,80,100,200\},
\]
with $B=10$.
For each tested dimension pair $(n,m)$, both methods are run from the same
initial point and on the same problem instance under a fixed response-oracle
budget. Since the absolute objective scale may vary with the dimensions, we
report the best-so-far normalized objective gap \footnote{The ${\rm NG}_{\rm obj}^{A}(n,m)$ is a random seed-averaged value.}
\[
{\rm NG}_{\rm obj}^{A}(n,m)
=
\frac{
\Phi_{\rm best}^{A}(n,m)-\Phi_{\rm ref}(n,m)
}{
\Phi_0(n,m)-\Phi_{\rm ref}(n,m)+10^{-8}
},
\]
where $\Phi(n,m)$ denotes the hyperobjective value for the dimension pair $(n,m)$, $A\in\{\mathrm{Bi\text{-}ZOL},\mathrm{VZO}\}$ and
$\Phi_0(n,m),\Phi_{\rm ref}(n,m)$ are the initial and lowest objective value observed over all methods,
random seeds, and iterates for the same dimension pair, respectively. 
Hence smaller values indicate that a method closes a larger portion of the initial objective gap.

\begin{figure}[htbp]
    \centering
    \includegraphics[width=\columnwidth]{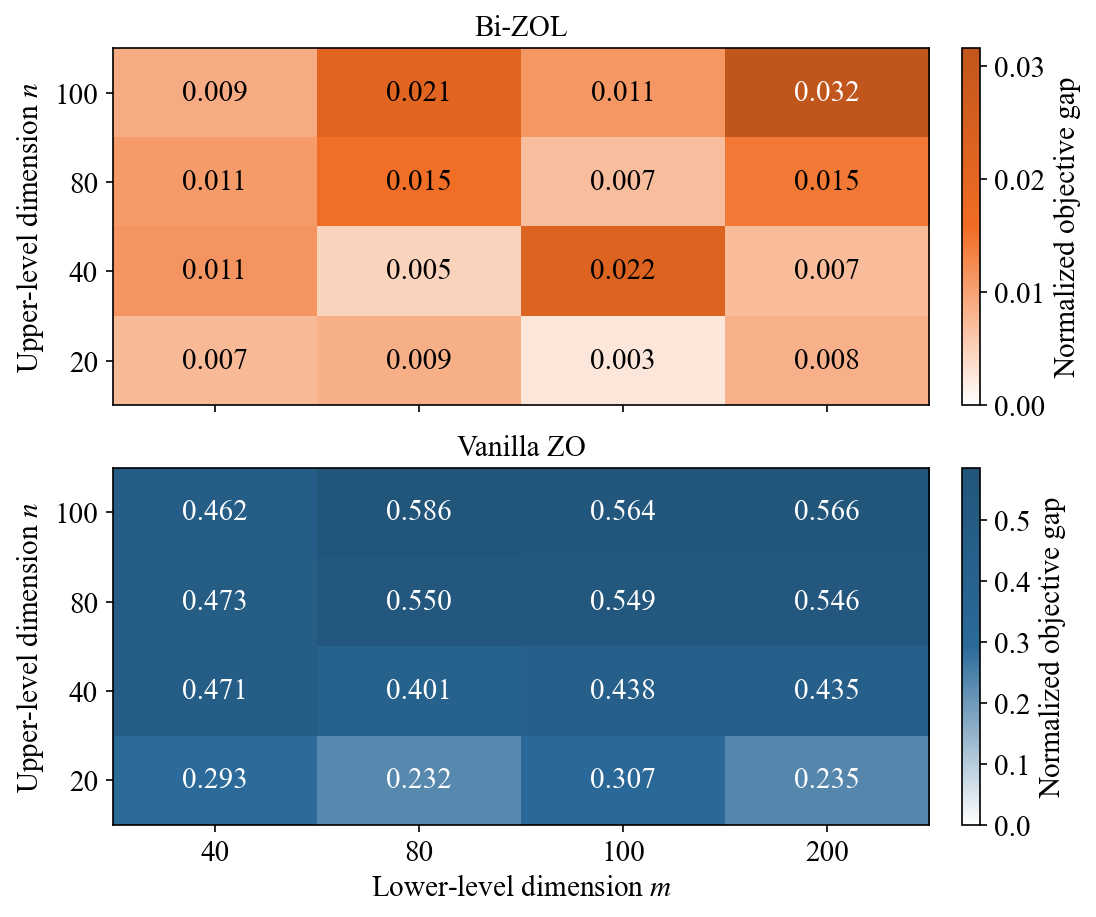}
    \caption{Dimension sensitivity under a fixed response-oracle budget.
    Each entry reports the seed-averaged best-so-far normalized objective gap.
    For each dimension pair \((n,m)\), the reference value is the lowest
    objective value observed over all methods, seeds, and iterates for that
    same dimension pair. Smaller values indicate better performance.}
    \label{fig:dimension_gap_heatmaps}
\end{figure}

Fig.~\ref{fig:dimension_gap_heatmaps} shows that Bi-ZOL consistently attains
a normalized objective gap close to zero over all tested dimension pairs. The
mean normalized gap of Bi-ZOL ranges from \(0.0027\) to \(0.0316\), with an
overall average of \(0.0121\). In contrast, VZO leaves a much larger residual
gap. Its mean normalized gap ranges from \(0.2321\) to \(0.5861\), with an
overall average of \(0.4443\). 
Moreover, for each \((n,m)\),
\(
{\rm NG}_{\rm obj}^{\rm VZO}(n,m)
>
{\rm NG}_{\rm obj}^{\rm Bi\text{-}ZOL}(n,m)
.
\) 
The average difference over the grid is
\(0.4322\), and the largest observed difference is \(0.5651\).
This result shows that Bi-ZOL outperforms VZO on all tested dimension pairs.

We further analyze the sensitivity of results across varying $n$ and $m$, respectively.
Table~\ref{tab:dimension_by_n} summarizes the results after averaging over the response dimension \(m\).
As \(n\) increases from \(20\) to \(100\), the normalized gaps of both methods increase.
Bi-ZOL remains close to the best observed objective value over the tested range of \(n\), whereas VZO leaves a substantially larger residual normalized gap.
This is consistent with the fact that VZO applies zeroth-order smoothing to the whole reduced objective in the \(n\)-dimensional upper-level space, while Bi-ZOL keeps the upper-level partial derivatives exact and applies zeroth-order sampling only to the response-sensitivity term.
This partial-smoothing structure leads to a more stable normalized objective gap over the tested dimensions.

\begin{table}[htp]
\centering
\caption{Normalized gap averaged over lower-level dimensions \(m\).}
\label{tab:dimension_by_n}
\resizebox{\columnwidth}{!}{%
\begin{tabular}{c c c c}
\toprule
\textbf{Upper-level dimension \(n\)}
& \textbf{${\rm NG}_{\rm obj}^{\rm Bi-ZOL}$}
& \textbf{${\rm NG}_{\rm obj}^{\rm VZO}$}
& \textbf{${\rm NG}_{\rm obj}^{\rm VZO} - {\rm NG}_{\rm obj}^{\rm Bi-ZOL}$} \\
\midrule
20  & 0.0068 & 0.2668 & 0.2600 \\
40  & 0.0114 & 0.4365 & 0.4251 \\
80  & 0.0120 & 0.5293 & 0.5173 \\
100 & 0.0182 & 0.5445 & 0.5263 \\
\bottomrule
\end{tabular}}
\end{table}

Table~\ref{tab:dimension_by_m} reports the results after averaging over the upper-level dimension \(n\). 
The Bi-ZOL gap remains close to zero, ranging from \(0.0096\) to \(0.0155\), whereas the VZO gap stays at a much larger level, ranging from \(0.4247\) to \(0.4645\). 
Although the Bi-ZOL gap mildly increases between \(m=40\) and \(m=200\) in terms of the absolute value, the relative variation is larger than that of VZO.
This is consistent with the fact that \(m\) explicitly affects the convergence and oracle complexity of Bi-ZOL as suggested in Theorem~\ref{thm:solution_bizol_convergence} and Corollary~\ref{cor:bizol_response_oracle_complexity}, whereas the standard upper-space zeroth-order analysis for VZO does not explicitly capture this response-dimension dependence in the same way.

\begin{table}[htp]
\centering
\caption{Normalized gap averaged over upper-level dimensions \(n\).}
\label{tab:dimension_by_m}
\resizebox{\columnwidth}{!}{%
\begin{tabular}{c c c c}
\toprule
\textbf{Lower-level dimension \(m\)}
& \textbf{${\rm NG}_{\rm obj}^{\rm Bi-ZOL}$}
& \textbf{${\rm NG}_{\rm obj}^{\rm VZO}$}
& \textbf{${\rm NG}_{\rm obj}^{\rm VZO} - {\rm NG}_{\rm obj}^{\rm Bi-ZOL}$} \\
\midrule
40  & 0.0096 & 0.4247 & 0.4150 \\
80  & 0.0125 & 0.4424 & 0.4299 \\
100 & 0.0108 & 0.4645 & 0.4537 \\
200 & 0.0155 & 0.4455 & 0.4301 \\
\bottomrule
\end{tabular}}
\end{table}

Finally, the comparison is consistent at the run level. 
Across the \(16\) dimension pairs and \(10\) random seeds for each pair, Bi-ZOL attains a smaller normalized objective gap than VZO in all \(160\) paired runs. 
This confirms that the advantage shown in the heatmaps is not caused by a small number of favorable random seeds.

\section{Conclusions}
\label{sec:conclusions}
In this paper, we studied lower-level constrained bilevel optimization in the response-oracle setting, where the lower-level model is unavailable and the response mapping may be nonsmooth. 
We proposed Bi-ZOL, a structure-guided zeroth-order Frank--Wolfe method that estimates only the missing response-sensitivity term while using exact upper-level partial derivatives at the true response. 
This partial-smoothing design connects the computable direction to the bilevel Clarke chain-rule structure and avoids treating the reduced hyperobjective as a generic black-box function.

We established a Bi-ZOL Frank--Wolfe stationarity certificate, quantified its structural bias, and proved finite-time convergence with response-oracle complexity guarantees. 
We also showed how this certificate transfers to Clarke Frank--Wolfe stationarity of the original nonsmooth reduced problem. 
Experiments on incentive-based tracking problems show that Bi-ZOL attains smaller stationarity gaps and lower hyperobjective values than vanilla zeroth-order smoothing, demonstrating the value of using bilevel structure in nonsmooth response-oracle problems.

\appendices
\section{Proofs for Nonsmooth First-Order Geometry}

\subsection{Proof of Proposition~\ref{prop:bl_chain_rule}} \label{app:proof_bl_chain_rule}
Let $\Omega_y$ be the full-measure set on which $y(\cdot)$ is differentiable.
For any sequence $\{x^k\}\subset\Omega_y$ with $x^k\to x$, the classical
chain rule gives
\[
\nabla \tilde\varphi(x^k)
=
\nabla_1\varphi(x^k,y(x^k))
+
\jac y(x^k)^\top
\nabla_2\varphi(x^k,y(x^k)).
\]
Since $y(\cdot)$ is locally Lipschitz and
$\varphi\in C^1(\mathbb R^n\times\mathbb R^m)$, we have
\[
\nabla_1\varphi(x^k,y(x^k))\to d_x(x),
\qquad
\nabla_2\varphi(x^k,y(x^k))\to d_y(x).
\]
Moreover, local Lipschitz continuity of $y(\cdot)$ implies that the
Jacobians $\jac y(x^k)$ are locally bounded. Hence every convergent
subsequence of $\{\jac y(x^k)\}$ has a limit $V\in\clarke y(x)$. Along such
a subsequence,
\[
\nabla \tilde\varphi(x^k)
\to
d_x(x)+V^\top d_y(x).
\]
Thus every limiting gradient of $\tilde\varphi$ at $x$ belongs to the set on
the right-hand side of \eqref{eq:clarke_subdiff_bl_chain_rule}.

Conversely, by the definition of the Clarke generalized Jacobian, every
$V\in\clarke y(x)$ belongs to the convex hull of limiting Jacobians of
$y(\cdot)$ around $x$. Since the mapping
\[
V\mapsto d_x(x)+V^\top d_y(x)
\]
is affine, its image over this convex hull is the convex hull of the
corresponding limiting gradients of $\tilde\varphi$ around $x$. By the
limiting-gradient characterization of the Clarke subdifferential, this gives
\eqref{eq:clarke_subdiff_bl_chain_rule}.

\subsection{Proof of Proposition~\ref{prop:anchored_model_interpretation}} \label{app:proof_anchored_model}

Since $Q_x$ differs from the scalarized response
$u\mapsto\langle d_y(x),y(u)\rangle$ only by a constant and a linear term, the
Clarke chain rule gives
\[
\partial^c Q_x(x)
=
d_x(x)
+
\left\{
V^\top d_y(x):V\in\clarke y(x)
\right\}.
\]
By Proposition~\ref{prop:bl_chain_rule}, the right-hand side is precisely
$\partial^c\tilde\varphi(x)$. This proves
\eqref{eq:anchored_model_same_clarke}.

Next, from the definition of $Q_{x,\delta}$,
\[
\nabla Q_{x,\delta}(u)
=
d_x(x)+\jac y_\delta(u)^\top d_y(x).
\]
Evaluating this identity at $u=x$ gives \eqref{eq:g_delta_gradient_anchored_model}.

It remains to prove the Goldstein inclusion. Observe that $Q_{x,\delta}$ is
the uniform smoothing of $Q_x$. Indeed, using the symmetry of the uniform
distribution on $\mathbb B_n$, we have
\[
\mathbb E_{\xi\sim{\rm Unif}(\mathbb B_n)}
\left[
Q_x(z+\delta \xi)
\right]
=
Q_{x,\delta}(z),
\]
because $\mathbb E[\xi]=0$. Since $Q_x$ is locally Lipschitz, the
randomized smoothing--Goldstein relation of
\cite[Theorem~3.1]{Lin2022} gives
\[
\nabla Q_{x,\delta}(x)\in \partial_\delta Q_x(x).
\]
Together with \eqref{eq:g_delta_gradient_anchored_model}, this yields
\[
g_\delta(x)\in \partial_\delta Q_x(x).
\]

Finally, for any $s$ with $\|s-x\|\le\delta$, the Clarke subdifferential of
$Q_x$ at $s$ satisfies
\[
\partial^c Q_x(s)
=
d_x(x)
+
\left\{
V^\top d_y(x):V\in\clarke y(s)
\right\}.
\]
Taking the convex hull of the union over all $\|s-x\|\le\delta$ gives
\eqref{eq:anchored_goldstein_set}.

\section{Proofs for Bi-ZOL Algorithm}
\label{app:proofs_zo_estimation}

\subsection{Proof of Lemma~\ref{lem:solution_hg_estimator}}
\label{app:proof_bizol_hypergradient_estimator}

We first recall the following standard scalar randomized smoothing estimates,
which will be applied to the anchored scalar function
\(h_x(u)=d_y(x)^\top y(u)\).

\begin{lemma}[Scalar randomized smoothing estimates]
\label{lem:scalar_smoothing_estimates}
Let \(h:\mathcal N\to\mathbb R\) be \(L_h\)-Lipschitz on
\(\mathcal X+\bar\delta\mathbb B_n\). Define
\[
h_{\delta}(u)
\coloneqq
\mathbb E_{\xi\sim{\rm Unif}(\mathbb B_n)}
[h(u+\delta \xi)].
\]
Define the two-point estimator
\[
\apnbla h_{\delta}(u)
\coloneqq
\frac{n}{2\delta}
\left(
h(u+\delta w)-h(u-\delta w)
\right)w ,
\]
where \(w\sim{\rm Unif}(\mathbb S^{n-1})\). Then
\begin{align}
\label{eq:scalar_smoothing_unbiased}
\mathbb E_w[\apnbla h_{\delta}(u)]
&=
\nabla h_{\delta}(u),\\
\label{eq:scalar_smoothing_second_moment}
\mathbb E_w[\|\apnbla h_{\delta}(u)\|^2]
&\le
16\sqrt{2\pi} nL_h^2.
\end{align}
Moreover, \(h_{\delta}\) is differentiable and
\begin{equation}
\label{eq:scalar_smoothing_gradient_lipschitz}
\|\nabla h_{\delta}(u)-\nabla h_{\delta}(u')\|
\le
\frac{cL_h\sqrt{n}}{\delta}\|u-u'\|,
\,
\forall u,u'\in\mathcal X,
\end{equation}
where \(c>0\) is a numerical constant.
\end{lemma}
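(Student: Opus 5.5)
The statement is Lemma~\ref{lem:scalar_smoothing_estimates}. It consists of three standard facts about ball smoothing of a Lipschitz function. I would prove them separately, using only the Lipschitz property of $h$ on $\mathcal X+\bar\delta\mathbb B_n$ and elementary properties of the uniform distributions on $\mathbb B_n$ and $\Sph^{n-1}$. All query points $u\pm\delta w$ and $u+\delta\xi$ lie in this region because $u\in\mathcal X$ and $\delta\le\bar\delta$.

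\emph{Unbiasedness.} The first step is the divergence-theorem identity
\[
\nabla h_\delta(u)=\frac{n}{\delta}\,\mathbb E_{w\sim{\rm Unif}(\Sph^{n-1})}[h(u+\delta w)w].
\]
This is standard for smooth $h$: write $h_\delta(u)=\frac{1}{\mathrm{vol}(\delta\mathbb B_n)}\int_{\delta\mathbb B_n}h(u+v)\,dv$, differentiate under the integral, and apply Gauss--Green, using the ratio of surface area to volume $n/\delta$. For Lipschitz $h$ I would extend the identity either by mollifying and passing to the limit, using uniform convergence together with the equi-Lipschitz property, or by differentiating directly the translate-of-the-ball integral, which only needs $h$ to be continuous. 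Since $w$ and $-w$ have the same law, $\mathbb E[h(u-\delta w)w]=-\mathbb E[h(u+\delta w)w]$. Averaging the two expressions then gives \eqref{eq:scalar_smoothing_unbiased}. The same argument shows that $h_\delta$ is differentiable, in fact $C^1$.

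\emph{Second moment.} Because $\|w\|=1$, we have
\[
\|\apnbla h_\delta(u)\|^2=\frac{n^2}{4\delta^2}\big(h(u+\delta w)-h(u-\delta w)\big)^2 .
\]
A naive Lipschitz bound gives $n^2L_h^2$, which is off by a factor of $n$. To get the linear-in-$n$ bound I would follow Shamir's argument. Subtract the constant $c_0=\mathbb E_w[h(u+\delta w)]$ and use $(a-b)^2\le 2(a-c_0)^2+2(b-c_0)^2$. By the symmetry of $w$ this reduces the problem to bounding $\mathbb E(h(u+\delta w)-c_0)^2$. The function $w\mapsto h(u+\delta w)$ is $\delta L_h$-Lipschitz on the sphere, so concentration of measure on $\Sph^{n-1}$ gives
\[
\Pr\big(|h(u+\delta w)-c_0|>t\big)\le 2\sqrt{2\pi}\,e^{-nt^2/(8\delta^2L_h^2)}.
\]
Integrating the tail yields $\mathbb E(h-c_0)^2\le c'\delta^2L_h^2/n$. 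Multiplying by $n^2/\delta^2$ produces a bound of the form $c\,nL_h^2$. Tracking constants with the $\sqrt{2\pi}$ version of the concentration inequality should give $16\sqrt{2\pi}$. This constant-tracking is where I expect the main care to be needed.

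\emph{Lipschitz gradient.} By the same identity,
\[
\nabla h_\delta(u)-\nabla h_\delta(u')=\frac{n}{\delta}\,\mathbb E_w\big[(h(u+\delta w)-h(u'+\delta w))w\big].
\]
For a unit vector $e$,
\[
\big|\langle e,\nabla h_\delta(u)-\nabla h_\delta(u')\rangle\big|\le \frac{n}{\delta}L_h\|u-u'\|\,\mathbb E|\langle e,w\rangle|.
\]
Since $\mathbb E|\langle e,w\rangle|\le \sqrt{\mathbb E\langle e,w\rangle^2}=1/\sqrt n$, this gives the constant $cL_h\sqrt n/\delta$ with $c=1$, which proves \eqref{eq:scalar_smoothing_gradient_lipschitz}.

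The main obstacle is the dimension-sharp second-moment bound. It requires the spherical concentration inequality rather than a plain Lipschitz estimate, and the stated numerical constant depends on which version of that inequality is used.
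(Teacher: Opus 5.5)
Your proposal is correct and follows the same standard route as the results the paper relies on; the paper's own proof is only a citation to Lin et al.\ (Proposition~2.2 and Lemma~D.1). Those results rest on Flaxman's sphere identity with antipodal symmetry for unbiasedness, and on Shamir's spherical-concentration argument, where the tail $2\sqrt{2\pi}\,e^{-nt^2/(8\delta^2L_h^2)}$ gives $\mathbb E\bigl(h(u+\delta w)-h(u-\delta w)\bigr)^2\le 64\sqrt{2\pi}\,\delta^2L_h^2/n$ and hence exactly the constant $16\sqrt{2\pi}$. Your gradient-Lipschitz step via $\mathbb E|\langle e,w\rangle|\le 1/\sqrt n$ is also valid, and it even pins down $c=1$.
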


\begin{proof}
The result follows from randomized smoothing identities; see
\cite[Proposition~2.2, Lemma~D.1]{Lin2022}.
\end{proof}

We now prove Lemma~\ref{lem:solution_hg_estimator}.

\begin{proof}[Proof of Lemma~\ref{lem:solution_hg_estimator}]
Fix \(x\in\mathcal X\) and recall
\[
d_x(x)\coloneqq \nabla_1\varphi(x,y(x)),
\qquad
d_y(x)\coloneqq \nabla_2\varphi(x,y(x)).
\]
By Assumption~\ref{ass:upper_regular}, we have \(\|d_y(x)\|\le M_y\).

Define the scalar function
\[
h_x(u)\coloneqq d_y(x)^\top y(u).
\]
Since \(y(\cdot)\) is \(L_y\)-Lipschitz on the query region,
\[
|h_x(u)-h_x(u')|
\le
\|d_y(x)\|\|y(u)-y(u')\|
\le
M_yL_y\|u-u'\|.
\]
Thus \(h_x\) is \(M_yL_y\)-Lipschitz. For fixed \(x\), we have
\begin{equation}
\label{eq:jac_action_scalar_identity}
\apjac y_\delta(x;w)^\top d_y(x)
=
\frac{n}{2\delta}
\left(
h_x(x+\delta w)-h_x(x-\delta w)
\right)w .
\end{equation}
Thus, the stochastic part of Bi-ZOL can be analyzed through the scalar
two-point estimator in Lemma~\ref{lem:scalar_smoothing_estimates} applied to
\(h_x\).

Combining \eqref{eq:scalar_smoothing_unbiased} with
\eqref{eq:jac_action_scalar_identity}, we have
\[
\begin{aligned}
\mathbb E_w[\apjac y_\delta(x;w)^\top d_y(x)]
&=
\mathbb E_w
\left[
\frac{n}{2\delta}
\left(
h_x(x+\delta w)-h_x(x-\delta w)
\right)w
\right] \\
&=
\nabla h_{x,\delta}(x),
\end{aligned}
\]
where
\[
h_{x,\delta}(u)
\coloneqq
\mathbb E_{\xi\sim{\rm Unif}(\mathbb B_n)}
[h_x(u+\delta \xi)].
\]
Moreover, for any \(u\in\mathcal X\),
\[
h_{x,\delta}(u)
=
\mathbb E_{\xi\sim{\rm Unif}(\mathbb B_n)}
[d_y(x)^\top y(u+\delta \xi)]
=
d_y(x)^\top y_\delta(u).
\]
Therefore,
\[
\nabla h_{x,\delta}(u)
=
\jac y_\delta(u)^\top d_y(x).
\]
Taking \(u=x\) gives
\[
\nabla h_{x,\delta}(x)
=
\jac y_\delta(x)^\top d_y(x).
\]
Therefore,
\[
\mathbb E_w[\widehat g_\delta(x;w)]
=
d_x(x)+\jac y_\delta(x)^\top d_y(x)
=
g_\delta(x).
\]
Averaging \(B\) independent samples gives
\eqref{eq:hg_estimator_unbiased}.

For the variance bound, let
\[
\zeta_b
\coloneqq
\apjac y_\delta(x;w_b)^\top d_y(x)
-
\jac y_\delta(x)^\top d_y(x).
\]
Then
\[
\widehat g_{\delta,B}(x)-g_\delta(x)
=
\frac{1}{B}\sum_{b=1}^B \zeta_b.
\]
The random vectors \(\{\zeta_b\}_{b=1}^B\) are independent and have zero mean.
Therefore,
\[
\mathbb E
\left[
\left\|
\widehat g_{\delta,B}(x)-g_\delta(x)
\right\|^2
\right]
=
\frac{1}{B^2}
\sum_{b=1}^B
\mathbb E[\|\zeta_b\|^2]
=
\frac{1}{B}
\mathbb E[\|\zeta_1\|^2].
\]
Using
\[
\mathbb E[\|\zeta_1\|^2]
\le
\mathbb E
\left[
\left\|
\apjac y_\delta(x;w_1)^\top d_y(x)
\right\|^2
\right],
\]
and applying \eqref{eq:scalar_smoothing_second_moment} to the
\(M_yL_y\)-Lipschitz function \(h_x\), we obtain
\[
\mathbb E[\|\zeta_1\|^2]
\le
16\sqrt{2\pi}nM_y^2L_y^2.
\]
This proves \eqref{eq:hg_estimator_variance}.
\end{proof}

\subsection{Proof of Lemma~\ref{lem:solution_ancillary_smoothness}} \label{app:proof_solution_ancillary_smoothness}
For \(x\in\mathcal X\),
\[
\nabla\bar\varphi_\delta(x)
=
\nabla_1\varphi(x,y_\delta(x))
+
\jac y_\delta(x)^\top
\nabla_2\varphi(x,y_\delta(x)).
\]
Let \(x,x'\in\mathcal X\). For the first term of $\nabla\bar\varphi_\delta(x)$, Assumption
\ref{ass:upper_regular} gives
\[
\begin{aligned}
&
\left\|
\nabla_1\varphi(x,y_\delta(x))
-
\nabla_1\varphi(x',y_\delta(x'))
\right\|\\
&\le
L_{1x}\|x-x'\|
+
L_{1y}\|y_\delta(x)-y_\delta(x')\|.
\end{aligned}
\]
Since \(y_\delta\) is \(L_y\)-Lipschitz, we have
\begin{equation}
\label{eq:proof_smooth_first_part}
\left\|
\nabla_1\varphi(x,y_\delta(x))
-
\nabla_1\varphi(x',y_\delta(x'))
\right\|
\le
(L_{1x}+L_{1y}L_y)\|x-x'\|.
\end{equation}

For the second term of $\nabla\bar\varphi_\delta(x)$, define
\[
v_\delta(x)
\coloneqq
\nabla_2\varphi(x,y_\delta(x)).
\]
Then
\begin{multline}
\left\| \jac y_\delta(x)^\top v_\delta(x) - \jac y_\delta(x')^\top v_\delta(x') \right\| \\
\le \left\| \jac y_\delta(x)^\top \left( v_\delta(x)-v_\delta(x') \right) \right\| \\
+ \left\| \left( \jac y_\delta(x)-\jac y_\delta(x') \right)^\top v_\delta(x') \right\|.
\end{multline}
For the first term,
\[
\left\|
\jac y_\delta(x)^\top
\left(
v_\delta(x)-v_\delta(x')
\right)
\right\|
\le
M_J\|v_\delta(x)-v_\delta(x')\|.
\]
By Assumption~\ref{ass:upper_regular},
\[
\begin{aligned}
\|v_\delta(x)-v_\delta(x')\|
&=
\left\|
\nabla_2\varphi(x,y_\delta(x))
-
\nabla_2\varphi(x',y_\delta(x'))
\right\|\\
&\le
L_{2x}\|x-x'\|
+
L_{2y}\|y_\delta(x)-y_\delta(x')\|\\
&\le
(L_{2x}+L_{2y}L_y)\|x-x'\|.
\end{aligned}
\]
Thus,
\begin{equation}
\label{eq:proof_smooth_second_part_a}
\left\|
\jac y_\delta(x)^\top
\left(
v_\delta(x)-v_\delta(x')
\right)
\right\|
\le
M_J(L_{2x}+L_{2y}L_y)\|x-x'\|.
\end{equation}

For the second term, define $h_{v}(u)=v_\delta(x')^\top y(u)$. 
We use the bound \(\|\nabla_2\varphi(x,y_\delta(x))\|\le M_y\), which is included in Assumption~\ref{ass:upper_regular}.
Because Assumption~\ref{ass:response} and \ref{ass:upper_regular}, it is easy to prove that $h_{v}(\cdot)$ is $M_yL_y$-Lipschitz.
Moreover,
\[
h_{v,\delta}(u)
=
\mathbb E[h_{v}(u+\delta w)]
=
v_\delta(x')^\top y_\delta(u),
\]
so
\[
\nabla h_{v,\delta}(u)
=
\jac y_\delta(u)^\top v_\delta(x').
\]
Applying \eqref{eq:scalar_smoothing_gradient_lipschitz} to \(h_v\), we get
\begin{flalign}
&\begin{aligned}
\left\| \left( \jac y_\delta(x)-\jac y_\delta(x') \right)^\top v_\delta(x') \right\| &= \|\nabla h_{v,\delta}(x)-\nabla h_{v,\delta}(x')\| \\
&\le \frac{cM_yL_y\sqrt{n}}{\delta}\|x-x'\|.
\end{aligned} &&
\end{flalign}
Combining this estimate with
\eqref{eq:proof_smooth_first_part} and
\eqref{eq:proof_smooth_second_part_a} gives
\[
\|\nabla\bar\varphi_\delta(x)-\nabla\bar\varphi_\delta(x')\|
\le
\left(
A_y+\frac{cM_yL_y\sqrt{n}}{\delta}
\right)
\|x-x'\|.
\]
This proves the lemma.

\subsection{Proof of Lemma~\ref{lem:solution_bias}} \label{app:proof_solution_bias}
By definition,
\[
\begin{aligned}
&\nabla\bar\varphi_\delta(x)-g_\delta(x)
=
\nabla_1\varphi(x,y_\delta(x))
-
\nabla_1\varphi(x,y(x))\\
&\quad+
\jac y_\delta(x)^\top
\left[
\nabla_2\varphi(x,y_\delta(x))
-
\nabla_2\varphi(x,y(x))
\right].
\end{aligned}
\]
Taking norms and using the triangle inequality gives
\[
\begin{aligned}
&\|\nabla\bar\varphi_\delta(x)-g_\delta(x)\|
\le
\left\|
\nabla_1\varphi(x,y_\delta(x))
-
\nabla_1\varphi(x,y(x))
\right\|\\
&\quad+
\|\jac y_\delta(x)\|
\left\|
\nabla_2\varphi(x,y_\delta(x))
-
\nabla_2\varphi(x,y(x))
\right\|.
\end{aligned}
\]
By Assumption~\ref{ass:upper_regular} and
\(\|\jac y_\delta(x)\|\le M_J\), we obtain
\[
\|\nabla\bar\varphi_\delta(x)-g_\delta(x)\|
\le
(L_{1y}+M_JL_{2y})\|y_\delta(x)-y(x)\|.
\]
Finally,
\[
\begin{aligned}
\|y_\delta(x)-y(x)\|
&=
\left\|
\mathbb E_{\xi\sim{\rm Unif}(\mathbb B_n)}
[y(x+\delta \xi)-y(x)]
\right\|\\
&\le
\mathbb E
\left[
\|y(x+\delta \xi)-y(x)\|
\right]\\
&\le
\mathbb E[L_y\delta\|\xi\|]
\le
L_y\delta .
\end{aligned}
\]
Therefore,
\[
\|\nabla\bar\varphi_\delta(x)-g_\delta(x)\|
\le
(L_{1y}+M_JL_{2y})L_y\delta
=
C_{\rm bias}\delta .
\]
Multiplying by \(D\) gives the bound
\(\kappa_{\rm bias}\delta\) for the corresponding Frank--Wolfe inner-product
term.

\subsection{Proof of Theorem~\ref{thm:solution_bizol_convergence}} \label{app:proof_solution_bizol_convergence}
Let $d_k\coloneqq z_k-x_k$.
Since \(z_k,x_k\in\mathcal X\), we have $\|d_k\|\le D$.
We can bound the descent of $\bar\varphi_\delta$ as follows:
\begin{equation}
\label{eq:proof_descent_with_bias}
\begin{aligned}
    \bar\varphi_\delta(&x_{k+1})
    \overset{(s.1)}{\le} \bar\varphi_\delta(x_k) + \gamma \langle \nabla\bar\varphi_\delta(x_k),d_k \rangle + \frac{L_{\bar\varphi,\delta}}{2}\gamma^2D^2 \\
    &\overset{(s.2)}{=} \bar\varphi_\delta(x_k) + \gamma \langle g_\delta(x_k),d_k \rangle \\
    & \qquad + \gamma \langle \nabla\bar\varphi_\delta(x_k)-g_\delta(x_k),d_k \rangle + \frac{L_{\bar\varphi,\delta}}{2}\gamma^2D^2 \\
    &\overset{(s.3)}{\le} \bar\varphi_\delta(x_k) + \gamma \langle g_\delta(x_k),d_k \rangle \\
    & \qquad + \gamma \|\nabla\bar\varphi_\delta(x_k)-g_\delta(x_k)\|\|d_k\| + \frac{L_{\bar\varphi,\delta}}{2}\gamma^2D^2 \\
    &\overset{(s.4)}{\le} \bar\varphi_\delta(x_k) + \gamma \langle g_\delta(x_k),d_k \rangle + \gamma\kappa_{\rm bias}\delta + \frac{L_{\bar\varphi,\delta}}{2}\gamma^2D^2. 
\end{aligned}
\end{equation}
The (s.1) follows from the smoothness property in Lemma~\ref{lem:solution_ancillary_smoothness}.
The (s.2) is obtained by adding and subtracting $\gamma \langle g_\delta(x_k), d_k \rangle$.
The (s.3) is due to the Cauchy-Schwarz inequality.
The (s.4) follows from the upper bound of the bias in Lemma~\ref{lem:solution_bias} and the condition $\|d_k\| \le D$.

Let
\[
z_k^\delta
\in
\arg\max_{z\in\mathcal X}
\left\langle z-x_k,-g_\delta(x_k)\right\rangle .
\]
Then
\[
\mathcal G_\delta(x_k)
=
\left\langle z_k^\delta-x_k,-g_\delta(x_k)\right\rangle .
\]
Because \(z_k\) is the Frank--Wolfe oracle for \(\widehat g_k\),
\[
\left\langle z_k-x_k,-\widehat g_k\right\rangle
\ge
\left\langle z_k^\delta-x_k,-\widehat g_k\right\rangle .
\]
Equivalently,
\begin{equation}
\label{eq:proof_fw_oracle_relation}
\left\langle \widehat g_k,z_k-z_k^\delta\right\rangle
\le 0.
\end{equation}
Now decompose
\begin{flalign}
\label{eq:proof_oracle_error_bound}
&\left\langle g_\delta(x_k),d_k\right\rangle
=
\left\langle g_\delta(x_k),z_k-x_k\right\rangle && \nonumber \\
&=
\left\langle g_\delta(x_k),z_k^\delta-x_k\right\rangle
+
\left\langle g_\delta(x_k),z_k-z_k^\delta\right\rangle && \nonumber \\
&=
-\mathcal G_\delta(x_k)
+
\left\langle g_\delta(x_k)-\widehat g_k,z_k-z_k^\delta\right\rangle + \left\langle \widehat g_k,z_k-z_k^\delta\right\rangle && \nonumber \\
&\leq-\mathcal G_\delta(x_k)
+
D\|g_\delta(x_k)-\widehat g_k\|. &&
\end{flalign}
Substituting \eqref{eq:proof_oracle_error_bound} into
\eqref{eq:proof_descent_with_bias} gives
\begin{align*}
\bar\varphi_\delta(x_{k+1})
\le
\bar\varphi_\delta(x_k)
-
\gamma\mathcal G_\delta(x_k)
+
\gamma D\|g_\delta(x_k)-\widehat g_k\| \\
+ 
\gamma\kappa_{\rm bias}\delta
+
\frac{L_{\bar\varphi,\delta}}{2}\gamma^2D^2 .
\end{align*}
Rearranging,
\begin{align*}
\mathcal G_\delta(x_k)
\le
\frac{\bar\varphi_\delta(x_k)-\bar\varphi_\delta(x_{k+1})}{\gamma}
+
D\|g_\delta(x_k)-\widehat g_k\| \\
+
\kappa_{\rm bias}\delta
+
\frac{L_{\bar\varphi,\delta}}{2}\gamma D^2 .
\end{align*}

Take conditional expectation with respect to the randomness at iteration
\(k\). By Lemma~\ref{lem:solution_hg_estimator},
\[
\mathbb E
\left[
\|g_\delta(x_k)-\widehat g_k\|
\,\middle|\,
x_k
\right]
\le
\sqrt{
\mathbb E
\left[
\|g_\delta(x_k)-\widehat g_k\|^2
\,\middle|\,
x_k
\right]
}
\le
\frac{\sigma_\delta}{\sqrt B}.
\]
Therefore,
\begin{align*}
\mathbb E[\mathcal G_\delta(x_k)]
\le
\frac{
\mathbb E[\bar\varphi_\delta(x_k)]
-
\mathbb E[\bar\varphi_\delta(x_{k+1})]
}{\gamma}
+
\frac{D\sigma_\delta}{\sqrt B} \\
+
\kappa_{\rm bias}\delta
+
\frac{L_{\bar\varphi,\delta}}{2}\gamma D^2 .
\end{align*}
Summing this inequality from \(k=0\) to \(T-1\) gives
\begin{align*}
\sum_{k=0}^{T-1}
\mathbb E[\mathcal G_\delta(x_k)]
\le
\frac{
\bar\varphi_\delta(x_0)
-
\mathbb E[\bar\varphi_\delta(x_T)]
}{\gamma} \\
+
T\left(
\frac{D\sigma_\delta}{\sqrt B}
+
\kappa_{\rm bias}\delta
+
\frac{L_{\bar\varphi,\delta}}{2}\gamma D^2
\right).
\end{align*}
Since
\[
\bar\varphi_\delta(x_T)
\ge
\min_{x\in\mathcal X}\bar\varphi_\delta(x),
\]
we have
\[
\bar\varphi_\delta(x_0)
-
\mathbb E[\bar\varphi_\delta(x_T)]
\le
\Delta_\delta.
\]
Dividing by \(T\) proves \eqref{eq:solution_main_convergence_bound}.

\section*{ACKNOWLEDGMENT}

\bibliographystyle{IEEEtran}
\bibliography{root}

\end{document}